\newtheorem{theorem}{Theorem}[section]
\newtheorem{lemma}[theorem]{Lemma}
\newtheorem{proposition}[theorem]{Proposition}
\newtheorem{corollary}[theorem]{Corollary}
\theoremstyle{definition}}
\theoremstyle{definition}}
\theoremstyle{definition}}
\theoremstyle{definition}}
\newtheorem*{q1}{Question F1}
\newtheorem*{q2}{Question F2}
\newtheorem*{q3}{Question F3}
\newtheorem*{q4}{Question F4}
\numberwithin{equation}{section}
\def\C{{\mathbb C}}
\def\A{{\mathbb A}}
\def\N{{\mathbb N}}
\def\Z{{\mathbb Z}}
\def\R{{\mathbb R}}
\def\Q{{\mathbb Q}}
\def\K{{\mathbb K}}
\def\T{{\bf T}}
\def\F{{\cal F}}
\def\uu{{\cal U}}
\def\epsilon{\varepsilon}
\def\kappa{\varkappa}
\def\phi{\varphi}
\def\leq{\leqslant}
\def\geq{\geqslant}
\def\dim{\hbox{\tt dim}\,}
\def\ker{\hbox{\tt ker}\,}
\def\ssub#1#2{#1_{{}_{{\scriptstyle #2}}}}
\def\uu{{\mathcal U}}
\def\dimr{\ssub{\hbox{\tt dim}}{\R}}
\def\expa{\ssub{\hbox{\tt exp}}{\A}}
\title{Hypercyclic tuples of operators on $\C^n$ and $\R^n$}
\author{Stanislav Shkarin}
\date{}
\begin{document}

\maketitle

\begin{abstract} A tuple $(T_1,\dots,T_n)$ of continuous linear
operators on a topological vector space $X$ is called hypercyclic if
there is $x\in X$ such that the the orbit of $x$ under the action of
the semigroup generated by $T_1,\dots,T_n$ is dense in $X$. This
concept was introduced by N.~Feldman, who have raised 7 questions on
hypercyclic tuples. We answer  those 4 of them, which can be dealt
with on the level of operators on finite dimensional spaces. In
particular, we prove that the minimal cardinality of a hypercyclci
tuple of operators on $\C^n$ (respectively, on $\R^n$) is $n+1$
(respectively, $\frac n2+\frac{5+(-1)^n}{4}$), that there are
non-diagonalizable tuples of operators on $\R^2$ which possess an
orbit being neither dense nor nowhere dense and construct a
hypercyclic 6-tuple of operators on $\C^3$ such that every operator
commuting with each member of the tuple is non-cyclic.
\end{abstract}

\small \noindent{\bf MSC:} \ \ 47A16, 37A25

\noindent{\bf Keywords:} \ \ Cyclic operators, hypercyclic
operators, supercyclic operators, universal families \normalsize

\section{Introduction \label{s1}}\rm

Throughout the article $\K$ stands for either the field $\C$ of
complex numbers or the field $\R$ of real numbers, $\Q$ is the field
of rational numbers, $\Z$ is the set of integers and $\Z_+$ is the
set of non-negative integers. Symbol $L(X)$ stands for the space of
continuous linear operators on a topological vector space $X$. A
family ${\cal F}=\{F_a:a\in A\}$ of continuous maps from a
topological space $X$ to a topological space $Y$ is called {\it
universal} if there is $x\in X$ for which the orbit $O({\cal
F},x)=\{F_ax:a\in A\}$ is dense in $Y$. Such an $x$ is called a {\it
universal element} for $\cal F$. We use the symbol $\uu({\cal F})$
to denote the set of universal elements for $\cal F$. If $X$ is a
topological vector space, $n\in\N$ and $\T=(T_1,\dots,T_n)\in
L(X)^n$, then we call $\T$ a {\it commuting $n$-tuple} if
$T_jT_k=T_kT_j$ for $1\leq j,k\leq n$. An $n$-tuple
$\T=(T_1,\dots,T_n)\in L(X)^n$ is called {\it hypercyclic} if it is
commuting and the semigroup
$$
\F_{\T}=\{T_1^{k_1}\dots T_n^{k_n}:k_j\in\Z_+\}
$$
is universal. The elements of $\uu(\F_\T)$ are called {\it
hypercyclic vectors for} $\T$. This concept was introduced and
studied by Feldman \cite{feld}. In the case $n=1$, it becomes the
conventional hypercyclicity, which has been widely studied, see the
book \cite{bama-book} and references therein. It turns out that,
unlike for the classical hypercyclicity, there are hypercyclic
tuples of operators on finite dimensional spaces. Namely, in
\cite{feld} it is shown that $\C^n$ admits a hypercyclic
$(n+1)$-tuple of operators. It is also shown that for every tuple of
operators on $\C^n$, any orbit is either dense or is nowhere dense
and that the latter fails for tuples of operators on $\R^n$. The
article \cite{feld} culminates in raising 7 open questions. We
answer those 4 of them (Questions~2, 3, 5 and 6 in the list) that
can be dealt with on the level of operators on finite dimensional
spaces.

\begin{q1} If $\T$ is a hypercyclic tuple, then must $\F_\T$ contain
a cyclic operator$\,?$
\end{q1}

\begin{q2} If $\T$ is a hypercyclic tuple, is there a cyclic operator
that commutes with $\T\,?$
\end{q2}

\begin{q3} Is there a hypercyclic $n$-tuple of operators on $\C^n\,?$
\end{q3}

\begin{q4} Are there non-diagonalizable commuting tuples of operators on
$\R^n$, possessing orbits that are neither dense nor nowhere
dense$\,?$
\end{q4}

We answer the above questions by considering dense additive
subsemigroups of $\R^n$ and by studying cyclic commutative
subalgebras of full matrix algebras.

As usual, we identify $L(\K^n)$ with the $\K$-algebra of $n\times n$
matrices with entries from $\K$. In this way $L(\R^n)$ will be
treated as an $\R$-subalgebra of the $\C$-algebra $L(\C^n)$. For a
commutative subalgebra $\A$ of $L(\K^n)$, by $\A^+$ we denote the
subalgebra of $S\in L(\K^n)$ commuting with each element of $\A$.
Clearly $\A$ is a subalgebra of $\A^+$, however $\A^+$ may fail to
be commutative. Recall that a subalgebra $\A$ of $L(\K^n)$ is called
{\it cyclic} if there is $x\in\K^n$ such that $\{Ax:A\in\A\}=\K^n$.
For a commuting tuple $\T$ of operators on $\K^n$ we denote the
unital subalgebra of $L(\K^n)$ generated by $\T$ by the symbol
$\A_{\T}$. Obviously, $\A_\T$ is commutative and therefore
$\A_\T\subseteq \A_\T^+$ and $\A_{\T}$ is cyclic if $\T$ is
hypercyclic. Thus each hypercyclic tuple of operators on $\K^n$ lies
in a unital commutative cyclic subalgebra of $L(\K^n)$.

Recall that a {\it character} on a complex algebra $\A$ is a
non-zero algebra homomorphism from $\A$ to $\C$. It is well-known
that the set of characters is always linearly independent in the
space of linear functionals on $\A$. Moreover, the set of characters
on a unital commutative Banach algebra is always non-empty. Hence an
$n$-dimensional unital commutative complex algebra has at least $1$
and at most $n$ characters. For a finite dimensional complex algebra
$\A$, $\kappa(\A)$ stands for the number of characters on $\A$. Thus
\begin{equation}\label{kappa}
\text{$1\leq \kappa(\A)\leq n$ \ for every $n$-dimensional unital
commutative complex algebra $\A$.}
\end{equation}
For a real algebra $\A$, a {\it character} on $\A$ is a non-zero
$\R$-algebra homomorphism from $\A$ to $\C$. It is well-known and
easy to see that each character on $\A$ has a unique extension to a
character on the complex algebra $\A_\C=\A\oplus i\A=\C\otimes\A$,
being the complexification of $\A$. Thus the characters on $\A$ and
the characters on $\A_\C$ are in the natural one-to-one
correspondence. If $\chi$ is a character on a real algebra $\A$,
then its complex conjugate $\overline{\chi}$ is also a character on
$\A$. Clearly, $\chi(\A)$ being a non-trivial $\R$-subalgebra of
$\C$ must coincide either with $\R$ or with $\C$. The characters of
the second type come in complex conjugate pairs. For a finite
dimensional $\R$-algebra, by $\kappa_0(\A)$ we denote the number of
complex conjugate pairs of characters $\chi$ on $\A$ satisfying
$\chi(\A)=\C$. Similarly, $\kappa_1(\A)$ stands for the number of
real-valued characters on $\A$. Since
$\kappa(\A_\C)=\kappa_1(\A)+2\kappa_0(\A)$, (\ref{kappa}) implies
that
\begin{equation}\label{kappa0}
\text{$1\leq 2\kappa_0(\A)+\kappa_1(\A)\leq n$ \ for every unital
commutative $n$-dimensional real algebra $\A$.}
\end{equation}

\begin{lemma}\label{cyc} Let $\A$ be a commutative cyclic subalgebra
of $L(\K^n)$. Then $I\in\A$, $\A=\A^+$ and $\dim\A=n$. In
particular, $1\leq\kappa(\A)\leq n$ if $\K=\C$ and $1\leq
\kappa_1(\A)+2\kappa_0(\A)\leq n$ if $\K=\R$.
\end{lemma}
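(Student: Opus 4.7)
The plan is to exploit the evaluation map $E_x:\A\to\K^n$, $E_x(A)=Ax$, for a cyclic vector $x$. Cyclicity means exactly that $E_x$ is surjective, and the whole lemma essentially follows by showing $E_x$ is actually a bijection that intertwines the algebra structure with multiplication by commuting operators on $\K^n$.

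First I would establish $\dim\A=n$ by proving $E_x$ is injective. Suppose $A\in\A$ satisfies $Ax=0$. For an arbitrary $y\in\K^n$, surjectivity of $E_x$ gives $B\in\A$ with $Bx=y$, and then commutativity yields $Ay=ABx=BAx=0$. Hence $A=0$, so $E_x$ is a linear bijection from $\A$ onto $\K^n$ and $\dim\A=n$.

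Next I would obtain $I\in\A$ by invoking cyclicity once more: pick $B\in\A$ with $Bx=x$. For any $y=Ax\in\K^n$, commutativity gives $By=BAx=ABx=Ax=y$, so $B$ acts as the identity on all of $\K^n$, i.e.\ $B=I\in\A$.

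For $\A=\A^+$, the inclusion $\A\subseteq\A^+$ is immediate from commutativity of $\A$. Conversely, take $S\in\A^+$; since $E_x$ is surjective, there is $C\in\A$ with $Cx=Sx$. For any $y=Ax$ with $A\in\A$, the fact that $S$ commutes with $A$ and $C$ commutes with $A$ gives $Sy=SAx=ASx=ACx=CAx=Cy$, so $S=C\in\A$. Finally, once $\dim\A=n$ and $\A$ is unital commutative, the bounds on $\kappa(\A)$, $\kappa_0(\A)$ and $\kappa_1(\A)$ are immediate from (\ref{kappa}) and (\ref{kappa0}).

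There is no serious obstacle: everything hinges on the single observation that commutativity lets one transfer statements about the single vector $x$ to statements about all of $\K^n$ via $E_x$. The only point requiring any care is making sure the argument for $\A=\A^+$ uses cyclicity (to choose $C$) and commutativity of $S$ with elements of $\A$ (not with $S$ itself), since $\A^+$ need not be commutative.
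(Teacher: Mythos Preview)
Your proof is correct and follows essentially the same approach as the paper: both use the evaluation map $A\mapsto Ax$ at a cyclic vector $x$ and the observation that anything commuting with $\A$ and killing $x$ must vanish. The paper is slightly more economical in that it applies this injectivity argument directly on $\A^+$, obtaining $n\leq\dim\A\leq\dim\A^+\leq n$ in one stroke (whence $\A=\A^+$ and $I\in\A^+=\A$ immediately), whereas you prove injectivity on $\A$, then $I\in\A$, then $\A^+\subseteq\A$ as three separate steps---but the substance is identical.
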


\begin{proof} Let $x\in\K^n$ be a cyclic vector for $\A$. Then the
linear map $A\mapsto Ax$ from $\A$ to $\K^n$ is surjective and
therefore $\dim\A\geq n$. Now let $B\in\A^+$ be such that $Bx=0$.
Since $B$ commutes with each member of $\A$, $\ker B$ is invariant
for every element of $\A$ and therefore
$\K^n=\{Ax:A\in\A\}\subseteq\ker B$. That is, $B=0$ whenever
$B\in\A^+$ and $Bx=0$. Thus the linear map $B\mapsto Bx$ from $\A^+$
to $\K^n$ is injective and therefore $\dim\A^+\leq n$. Since also
$\A\subseteq\A^+$, we get $\A=\A^+$ and $\dim\A=n$. Since
$I\in\A^+$, we have $I\in\A$. The required estimates now follow from
(\ref{kappa}) and (\ref{kappa0}).
\end{proof}

The significance of Lemma~\ref{cyc} becomes clear in view of the
next two results.

\begin{theorem}\label{cyc2} Let $\A$ be a commutative cyclic
subalgebra of $L(\C^n)$ and $m=2n-\kappa(\A)+1$. Then $\A$ contains
a hypercyclic $m$-tuple of operators on $\C^n$ and $\A$ contains no
hypercyclic $(m-1)$-tuples.
\end{theorem}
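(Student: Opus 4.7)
The plan is to translate hypercyclicity in $\A$ into a density question for a finitely generated subsemigroup of an explicit commutative real Lie group, and then count the minimal number of generators. By the Wedderburn decomposition, $\A\cong\A_1\oplus\cdots\oplus\A_k$ with $k=\kappa(\A)$, where each $\A_j$ is local with residue field $\C$; set $n_j=\dim\A_j$, so $n_1+\cdots+n_k=n$. Lemma~\ref{cyc} gives $\dim\A=n$, hence $\C^n\cong\A$ as an $\A$-module (via a cyclic vector), so $\T\in\A^m$ is hypercyclic iff the multiplicative semigroup $\F_\T\subseteq\A$ is dense in $\A$. Write $\A^*$ for the group of units of $\A$ (not to be confused with $\A^+$). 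If some $T_l\notin\A^*$, then $\chi_j(T_l)=0$ for some $j$, so any orbit element approximating a target of nonzero $j$-th character must have $k_l=0$; hence $\F_\T$ is dense in $\A$ iff the orbit of the sub-tuple $(T_{l'})_{l'\neq l}$ is dense, and iteration reduces the whole problem to the case $\T\in(\A^*)^m$.

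On each local factor, the unique factorization $T^{(j)}=\chi_j(T)\exp(N_j)$ with $N_j\in\mathfrak{m}_j$ (the maximal ideal of $\A_j$) gives an isomorphism of commutative real Lie groups
\begin{equation*}
\A^*\;\cong\;\prod_{j=1}^k\bigl(\C^*\times\mathfrak{m}_j\bigr)\;\cong\;\R^{2n-k}\times\mathbb{T}^k.
\end{equation*}
Under this identification, $\F_\T$ for $\T\in(\A^*)^m$ corresponds to the additive subsemigroup $\Sigma_\T$ of $\R^{2n-k}\times\mathbb{T}^k$ generated by the $m$ log-images of $T_1,\dots,T_m$. Since $\A^*$ is open and dense in $\A$, hypercyclicity of $\T$ is equivalent to density of $\Sigma_\T$ in $\R^{2n-k}\times\mathbb{T}^k$, and the theorem reduces to the statement that $\R^d\times\mathbb{T}^k$ with $d=2n-k$ carries a dense finitely generated subsemigroup on $m$ generators iff $m\geq d+1=2n-\kappa(\A)+1$.

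To handle that reduction I would invoke the density criterion: a subsemigroup of $\R^d\times\mathbb{T}^k$ generated by $w_1,\dots,w_m$ is dense iff (a) the $\R^d$-projections of $w_l$ positively span $\R^d$, and (b) the subgroup they additively generate is dense in $\R^d\times\mathbb{T}^k$. The lower bound follows from (a) alone: any collection of at most $d$ vectors in $\R^d$ lies in a common closed half-space (there is a supporting hyperplane to their positive cone), so no $(m-1)$-tuple with $m-1\leq 2n-k$ satisfies (a); combined with the reduction to units, this rules out hypercyclic $(m-1)$-tuples in $\A$. For the upper bound with $m=2n-k+1$, I would take the $\R^{2n-k}$-projections of the generators to be, for instance, the vertices of a regular $(2n-k)$-simplex centered at the origin (which positively span $\R^{2n-k}$ using exactly $m$ vertices), and then choose their $\mathbb{T}^k$-components generically so that (b) holds. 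Pulling these vectors back through the exponential parametrization produces the desired hypercyclic $m$-tuple in $\A$.

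The main obstacle is the sufficiency direction of the density criterion: necessity of (a) and (b) is immediate, but sufficiency requires converting a $\Z^m$-approximation of a target (produced by (b)) into a $\Z_+^m$-approximation without losing closeness. The standard route is to shift the integer coefficients by a large near-integer multiple of a positive real vanishing relation $\sum c_lw_l=0$ (whose existence is guaranteed by (a)), and to control the rounding error via a Kronecker equidistribution argument on the $m$-torus; this is a classical ingredient in semigroup dynamics. A secondary, more routine, point is to verify that (a) and (b) can be achieved simultaneously by a generic $m$-tuple in $\A^*$, which follows because the sets of parameters violating either condition are meagre in the natural parameter space.
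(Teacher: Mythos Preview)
Your approach is sound and shares the paper's core idea of linearizing via the exponential, but the execution differs and your explicit construction has a slip. The paper works upstairs in $(\A,+)\cong\R^{2n}$: by Lemma~\ref{expc}, $\expa:\A\to\A^\star$ is a surjective local homeomorphism with kernel a rank-$k$ lattice $\langle B_1,\dots,B_k\rangle$; extending to an $\R$-basis $B_1,\dots,B_{2n}$ and invoking Corollary~\ref{subs2} produces $B_0$ with the additive semigroup on $B_0,\dots,B_{2n}$ dense in $\R^{2n}$, and since $e^{B_1}=\dots=e^{B_k}=I$ the $m$ operators $e^{B_0},e^{B_{k+1}},\dots,e^{B_{2n}}$ already suffice. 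This bypasses your ``main obstacle'' entirely: the density is settled in the torsion-free group $\R^{2n}$ by a single Kronecker step, and the torus factor $\mathbb{T}^k$ never appears. Note also that your regular-simplex choice does \emph{not} achieve (b): the centred vertices satisfy the rational relation $\sum_l v_l=0$, so the subgroup of $\R^{2n-k}$ they generate is merely a lattice, and no choice of $\mathbb{T}^k$-components can then make the full subgroup dense; it is your generic fallback that actually works, but at that point you are essentially re-proving Corollary~\ref{subs2} inside $\R^{2n-k}\times\mathbb{T}^k$. For the lower bound the paper lifts a hypercyclic $r$-tuple of units to $A_1,\dots,A_r\in\A$, adjoins the $k$ kernel generators, and uses Lemma~\ref{GH} together with Lemma~\ref{subs1} to force $r+k\geq 2n+1$; your positive-spanning argument in $\R^{2n-k}$ is a neat and somewhat more direct alternative that exploits the semigroup (rather than group) structure.
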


\begin{theorem}\label{cyc22} Let $\A$ be a commutative cyclic
subalgebra of $L(\R^n)$ and $r=n-\kappa_0(\A)+1$. Then $\A$ contains
a hypercyclic $r$-tuple of operators on $\R^n$, while every orbit of
every $(r-1)$-tuple of operators from $\A$ is nowhere dense.
\end{theorem}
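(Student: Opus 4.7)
The plan is to transport the problem into the algebra $\A$ itself. By Lemma~\ref{cyc}, $\dim\A=n$; fixing a cyclic vector $v\in\R^n$ and identifying $\R^n$ with $\A$ via $A\mapsto Av$, every element of $\A$ acts on $\R^n$ by multiplication in $\A$. Orbits of an $r$-tuple $(T_1,\dots,T_r)\in\A^r$ thereby become translates of the multiplicative semigroup $\{T_1^{k_1}\cdots T_r^{k_r}:k_j\in\Z_+\}$. Writing $\A^\times$ for the group of units of $\A$ (an open dense subset of $\A$, since its complement lies in the $\det=0$ hypersurface), hypercyclicity of a tuple and universal nowhere-denseness of its orbits translate, respectively, to density and nowhere-denseness of this semigroup in $\A$.

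The main structural tool is the exponential map. Decompose $\A$ as a product of local real factors, one for each character of $\A_{\C}$: $\kappa_1(\A)$ factors with residue field $\R$ and $\kappa_0(\A)$ factors with residue field $\C$ (the latter grouping pairs of complex conjugate characters). The map $\exp:\A\to\A_0^\times$ is a surjective Lie group homomorphism onto the identity component of $\A^\times$, whose kernel is a lattice $\Lambda\subset\A$ of rank $\kappa_0(\A)$, coming from the $2\pi i$-periodicity of the exponential in each complex-residue block. Hence $\A_0^\times$ is diffeomorphic to $\A/\Lambda$, which is topologically $\R^{n-\kappa_0}\times(\R/\Z)^{\kappa_0}$, and $\A^\times/\A_0^\times\cong(\Z/2\Z)^{\kappa_1}$. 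Under $\log$, a multiplicative semigroup lying in $\A_0^\times$ corresponds to an additive semigroup in $\A/\Lambda$ with the same density/nowhere-density properties.

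For the lower bound, suppose $(T_1,\dots,T_{r-1})\in\A^{r-1}$ with $r-1=n-\kappa_0$. The intersection of the multiplicative semigroup with any coset of $\A_0^\times$ is a translate of the $\exp$-image of an additive semigroup with at most $n-\kappa_0$ generators in $\A/\Lambda$. Projecting such an additive semigroup to the $\R^{n-\kappa_0}$-factor, one gets either a subset of a proper linear subspace (if the generators are linearly dependent) or the discrete image of $\Z_+^{n-\kappa_0}$ under a linear injection (if they are independent); in either case the projection has empty interior in $\R^{n-\kappa_0}$, so the additive semigroup is nowhere dense in $\R^{n-\kappa_0}\times(\R/\Z)^{\kappa_0}$. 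Summing over the finitely many cosets of $\A_0^\times$ visited, every orbit of every $(r-1)$-tuple from $\A$ is nowhere dense in $\A$.

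For the upper bound I would construct the $r$-tuple in the form $T_i=\epsilon_i\exp(S_i)$, where $S_1,\dots,S_r\in\A$ are chosen so that their $\Z_+$-additive semigroup is dense in $\A/\Lambda$, and $\epsilon_1,\dots,\epsilon_r\in\A^\times$ are chosen so that their classes generate $\A^\times/\A_0^\times\cong(\Z/2\Z)^{\kappa_1}$ (possible since $r>\kappa_1$). Commutativity is automatic. The resulting multiplicative semigroup visits each of the $2^{\kappa_1}$ cosets of $\A_0^\times$ along a sub-semigroup whose $\exp$-image remains dense in that coset, so the full orbit is dense in $\A^\times$ and hence in $\A$. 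The main obstacle is the Diophantine step of producing $S_i$'s whose $\Z_+$-additive semigroup is dense in $\R^{n-\kappa_0}\times(\R/\Z)^{\kappa_0}$: here $r=(n-\kappa_0)+1$ is exactly one more than the Euclidean dimension, the known sharp threshold for such density, and a suitable choice should exist by a standard genericity argument combining Kronecker-type density in the toral factor with a balanced choice of positive/negative directions in the Euclidean factor.
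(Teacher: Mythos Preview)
Your outline is essentially the paper's strategy: identify $\R^n$ with $\A$ via a cyclic vector, use $\exp_\A$ and the structure $\A^\times\cong(\Z/2\Z)^{\kappa_1}\times\exp_\A(\A)$ with $\ker\exp_\A$ a rank-$\kappa_0$ lattice, then reduce to an additive density question. The technical routes differ slightly. For the lower bound the paper does not project to the $\R^{n-\kappa_0}$ factor; instead it squares each $T_j$ (this is the key device, Lemma~\ref{expsq}) to land in $\exp_\A(\A)$, picks logarithms $A_j$, and applies Lemma~\ref{subs1} to the additive \emph{group} generated by $A_1,\dots,A_{r-1}$ together with the $\kappa_0$ generators of $\ker\exp_\A$ (total $n$ generators in $\R^n$). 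For the upper bound the paper does not decouple the signs from the exponents; it invokes Lemma~\ref{subs3} once to get density directly in $G\times\A$, then observes that the $\kappa_0$ generators lying in $\ker\exp_\A$ with trivial $G$-component can be dropped.

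Your version works, but two steps need tightening. First, your assertion that the intersection of the multiplicative semigroup with a coset of $\A_0^\times$ is the $\exp$-image of an additive semigroup with at most $n-\kappa_0$ generators is false as stated: the submonoid $\{(k_j)\in\Z_+^{r-1}:\sum k_j\bar\epsilon_j=g\}$ can require more than $r-1$ generators. The fix is simpler than what you wrote: the whole multiplicative semigroup sits inside $G\cdot\exp_\A(H)$ where $H=\{\sum k_jS_j:k_j\in\Z_+\}$ has $r-1$ generators, so it suffices to show $\exp_\A(H)$ is nowhere dense, and your projection dichotomy then applies directly to $H$. (You also need the one-line reduction to invertible $T_j$, by induction on the number of generators.) Second, in the upper bound the claim that each coset is visited densely is not automatic from density of $\{\sum k_iS_i\}$ alone; you need the observation that $\{\sum 2k_iS_i\}$ is still dense in $\A/\Lambda$ (multiplication by $2$ being an open surjection there), whence translating by a fixed $\sum m_iS_i$ hitting the desired coset gives density in that coset.
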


The subalgebra $\A_{\rm diag}$ of $L(\C^n)$ consisting of all
operators with diagonal matrices is commutative, cyclic and has
exactly $n$ characters. It is also easy to show that a commutative
subalgebra $\A$ of $L(\C^n)$ has $n$ characters if and only if $\A$
is conjugate to $\A_{\rm diag}$. Now if $\A_0$ is the subalgebra of
$L(\C^2)$
consisting of the matrices of the form $\begin{pmatrix}a&b\\
0&a\end{pmatrix}$ with $a,b\in\C$, then then $\A_0$ is commutative
and cyclic and $\kappa(\A_0)=1$. The direct sum of $\A_0$ and $n-2$
copies of $L(\C^1)$ provides a commutative cyclic subalgebra of
$L(\C^n)$ with exactly $n-1$ characters. By Theorem~\ref{cyc2}, it
contains a hypercyclic $(n+2)$-tuple. Thus Theorem~\ref{cyc2}
implies the following corollary, which answers Question~F3
negatively.

\begin{corollary}\label{cn} The minimal cardinality of a
hypercyclic tuple of operators on $\C^n$ is exactly $n+1$. The
minimal cardinality of a non-diagonalizabe hypercyclic tuple of
operators on $\C^n$ is $n+2$.
\end{corollary}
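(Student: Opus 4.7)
The plan is to derive both claims directly from Theorem~\ref{cyc2} and Lemma~\ref{cyc}, together with the fact (stated just above the corollary) that a commutative subalgebra of $L(\C^n)$ has exactly $n$ characters if and only if it is conjugate to $\A_{\rm diag}$. The upper bounds are essentially delivered by the two examples produced immediately before the corollary; the substantive content is the matching lower bounds, which amount to maximizing $\kappa(\A)$ across the admissible commutative cyclic subalgebras $\A$ of $L(\C^n)$.

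For the first assertion I would take any hypercyclic $m$-tuple $\T$ on $\C^n$ and pass to the generated unital algebra $\A_\T$; since $\T$ is hypercyclic, $\A_\T$ is commutative and cyclic, so Lemma~\ref{cyc} gives $\kappa(\A_\T)\leq n$. As $\T\subseteq\A_\T$, applying Theorem~\ref{cyc2} to $\A=\A_\T$ yields $m\geq 2n-\kappa(\A_\T)+1\geq n+1$. The matching upper bound is Theorem~\ref{cyc2} applied to $\A_{\rm diag}$ itself, which has $\kappa=n$ and hence contains a hypercyclic $(n+1)$-tuple.

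For the second assertion the key observation is that a commuting tuple $\T$ on $\C^n$ is simultaneously diagonalizable if and only if $\A_\T$ is conjugate to a subalgebra of $\A_{\rm diag}$, and combined with $\dim\A_\T=n=\dim\A_{\rm diag}$ (Lemma~\ref{cyc}) this is equivalent to $\A_\T$ being conjugate to $\A_{\rm diag}$, i.e.\ to $\kappa(\A_\T)=n$. So a non-diagonalizable hypercyclic tuple forces $\kappa(\A_\T)\leq n-1$, and Theorem~\ref{cyc2} then gives $m\geq 2n-(n-1)+1=n+2$. The matching upper bound is the example above the corollary: $\A=\A_0\oplus L(\C^1)^{n-2}$ is commutative and cyclic with $\kappa(\A)=n-1$, so by Theorem~\ref{cyc2} it contains a hypercyclic $(n+2)$-tuple $\T\subseteq\A$; this tuple is non-diagonalizable because $\dim\A_\T=n=\dim\A$ forces $\A_\T=\A$, and $\A$ evidently contains a non-zero nilpotent coming from the $\A_0$ summand.

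The one mildly delicate step is the equivalence between simultaneous diagonalizability of the commuting tuple and $\kappa(\A_\T)=n$; this follows from the classical simultaneous diagonalization theorem for commuting diagonalizable operators, the dimension count from Lemma~\ref{cyc}, and the stated characterization of $n$-character subalgebras. Everything else is a mechanical substitution of the optimal value of $\kappa(\A)$ into Theorem~\ref{cyc2}.
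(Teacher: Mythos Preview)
Your argument is correct and is essentially the same as the paper's: both derive the lower bounds by passing to $\A_\T$, bounding $\kappa(\A_\T)$ via Lemma~\ref{cyc} (and, in the non-diagonalizable case, via the characterization $\kappa(\A)=n\iff\A$ conjugate to $\A_{\rm diag}$), and then invoking Theorem~\ref{cyc2}; the upper bounds come from the two exhibited subalgebras $\A_{\rm diag}$ and $\A_0\oplus L(\C^1)^{n-2}$. You spell out a couple of points the paper leaves implicit---notably why the $(n+2)$-tuple produced inside $\A_0\oplus L(\C^1)^{n-2}$ is itself non-diagonalizable (via $\A_\T=\A$ and the nilpotent in $\A_0$)---but the route is the same.
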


If we consider the subalgebra $\A_1$ of $L(\R^2)$ consisting of
matrices of the form $\begin{pmatrix}a&b\\ -b&a\end{pmatrix}$ with
$a,b\in\R$, then $\A_1$ is commutative and cyclic and
$\kappa_0(\A_1)=1$. Then the direct sum $\A_m$ of $m$ copies of
$\A_1$ is a cyclic commutative subalgebra of $L(\R^{2m})$ with
$\kappa_0(\A_m)=m$. Similarly, the direct sum $\A'_m$ of $L(\R^1)$
and $m$ copies of $\A_1$ is a cyclic commutative subalgebra of
$L(\R^{2m+1})$ with $\kappa_0(\A'_m)=m$. Thus for every positive
integer $n$, there is a commutative cyclic subalgebra $\A$ of
$L(\R^n)$ with $\kappa_0(\A)$ being exactly the integer part of
$\frac n2$. This observation together with Theorem~\ref{cyc22}
implies the following corollary.

\begin{corollary}\label{rn} Let $n$ be a positive integer and
$k=\frac n2+\frac{5+(-1)^{n+1}}4$. Then there is a hypercyclic
$k$-tuple of operators on $\R^n$. Moreover, every orbit of every
commuting  $(k-1)$-tuple of operators on $\R^n$ is nowhere dense.
\end{corollary}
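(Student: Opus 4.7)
The plan is to reduce both halves of the statement to Theorem~\ref{cyc22} together with the universal bound (\ref{kappa0}). First I unpack the constant $k$: writing $m=\lfloor n/2\rfloor$, the expression $\frac{n}{2}+\frac{5+(-1)^{n+1}}{4}$ equals $m+1$ when $n=2m$ and $m+2$ when $n=2m+1$, so in both cases $k=n-m+1$. Existence of a hypercyclic $k$-tuple is then immediate from the construction preceding the statement, which exhibits a commutative cyclic subalgebra $\A\subseteq L(\R^n)$ (namely $\A_m$ or $\A'_m$) with $\kappa_0(\A)=m$: Theorem~\ref{cyc22} applied to this $\A$ produces a hypercyclic $r$-tuple with $r=n-\kappa_0(\A)+1=k$.

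For the second assertion I fix an arbitrary commuting $(k-1)$-tuple $\T=(T_1,\dots,T_{k-1})$ on $\R^n$ and consider the unital subalgebra $\A_\T\subseteq L(\R^n)$ it generates, which is automatically commutative. Two cases arise. If $\A_\T$ is not cyclic, then for every $x\in\R^n$ the set $\A_\T x=\{Ax:A\in\A_\T\}$ is a proper linear subspace of $\R^n$, hence closed and nowhere dense; since $O(\F_\T,x)\subseteq\A_\T x$, the orbit is nowhere dense as well.

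If $\A_\T$ is cyclic, then (\ref{kappa0}) gives $2\kappa_0(\A_\T)\leq n$, so $\kappa_0(\A_\T)\leq m$, and consequently $r(\A_\T):=n-\kappa_0(\A_\T)+1\geq k$. By Lemma~\ref{cyc} we have $I\in\A_\T$, so I pad $\T$ with $r(\A_\T)-k\geq 0$ copies of $I$ to obtain a commuting tuple $\widetilde{\T}$ of length $r(\A_\T)-1$ drawn from $\A_\T$. Multiplying by $I$ does not alter any product, so $\F_{\widetilde{\T}}=\F_\T$ and the two tuples have identical orbits; Theorem~\ref{cyc22} applied to $\A_\T$ then shows that every orbit of $\widetilde{\T}$, and hence of $\T$, is nowhere dense. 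The entire argument is bookkeeping once one has Theorem~\ref{cyc22}; the only mildly delicate point is remembering that Theorem~\ref{cyc22} assumes cyclicity, so the non-cyclic case must be handled separately by the elementary subspace observation above.
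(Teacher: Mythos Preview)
Your argument is correct and matches the paper's intended route: the paper simply states that the corollary follows from Theorem~\ref{cyc22} together with the preceding construction of the subalgebras $\A_m$ and $\A'_m$ with $\kappa_0=m=\lfloor n/2\rfloor$, leaving the details implicit. Your explicit handling of the non-cyclic case and the padding-by-$I$ trick to reduce a $(k-1)$-tuple to an $(r-1)$-tuple in a possibly larger cyclic algebra are exactly the missing bookkeeping, and both are sound.
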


Note that $\frac n2+\frac{5+(-1)^{n+1}}4$ is $\frac n2+1$ if $n$ is
even and is $\frac{n+3}2$ if $n$ is odd. The following result
provides negative answers to Questions~F1 and~F2.

\begin{proposition}\label{cy} There is a hypercyclic
$6$-tuple $\T$ of operators on $\C^3$ such that every operator
commuting with $($all members of$\,)$ $\T$ is non-cyclic. Moreover,
there is a hypercyclic $4$-tuple $\T$ of operators on $\R^3$ such
that every operator commuting with $\T$ is non-cyclic.
\end{proposition}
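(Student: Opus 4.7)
The plan is to reduce the proposition to producing a single $3$-dimensional commutative cyclic subalgebra $\A\subseteq L(\K^3)$ (once for $\K=\C$, once for $\K=\R$) with the property that \emph{no element of $\A$ is a cyclic operator} on $\K^3$. The point of such a reduction is that if $\T\subset\A$ is hypercyclic, then $\A_\T\subseteq\A$ is cyclic, and Lemma~\ref{cyc} forces $\dim\A_\T=3=\dim\A$, so $\A_\T=\A$ and, by the same lemma, $\A_\T^+=\A$. Hence any operator commuting with every member of $\T$ lies in $\A$, and is non-cyclic by hypothesis. The cardinalities $6$ and $4$ will then come from Theorems~\ref{cyc2} and~\ref{cyc22} applied to the chosen $\A$.

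For the construction I would take $\A=\K I+\K E_1+\K E_2\subseteq L(\K^3)$, where $E_1,E_2$ are the rank-one matrices sending $e_1$ to $e_2$ and $e_3$ respectively and annihilating $e_2,e_3$. A direct check shows $E_iE_j=0$ for all $i,j\in\{1,2\}$, so $\A$ is a commutative unital algebra of dimension $3$; the vector $e_1$ is cyclic for $\A$, being mapped to $e_1,e_2,e_3$ by $I,E_1,E_2$. Every element $A=aI+bE_1+cE_2$ of $\A$ satisfies $(A-aI)^2=0$, so its minimal polynomial has degree at most $2$ and $A$ is non-cyclic on $\K^3$.

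The character count is immediate: any character $\chi$ of $\A$ satisfies $\chi(E_i)^2=\chi(E_i^2)=0$, forcing $\chi(E_i)=0$, so $\chi$ is uniquely the map $aI+bE_1+cE_2\mapsto a$. Therefore $\kappa(\A)=1$ in the complex case, and Theorem~\ref{cyc2} yields a hypercyclic $m$-tuple inside $\A$ with $m=2\cdot 3-1+1=6$. In the real case, the unique character of $\A_\C$ is real-valued on $\A$, so $\kappa_0(\A)=0$ and $\kappa_1(\A)=1$, and Theorem~\ref{cyc22} yields a hypercyclic $r$-tuple inside $\A$ with $r=3-0+1=4$. Combined with the observation of the first paragraph, both parts of the proposition follow.

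The only point that requires insight is the reformulation in the first paragraph: realising that non-cyclicity of \emph{every} operator in the commutant can be forced purely algebraically, by choosing $\A$ in which every element has minimal polynomial of degree strictly less than $n$. Once this is seen, the algebra $\K I+\K E_1+\K E_2$ is essentially the most obvious candidate, and the remaining verifications are routine; no serious obstacle is expected beyond this reformulation.
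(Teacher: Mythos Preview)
Your proposal is correct and essentially identical to the paper's proof: your algebra $\K I+\K E_1+\K E_2$ is exactly the paper's $\A=\{A_z:z\in\K^3\}$ written in a different basis, and you invoke Lemma~\ref{cyc} and Theorems~\ref{cyc2}, \ref{cyc22} in the same way. The only cosmetic difference is that you argue non-cyclicity via the minimal polynomial having degree $\leq 2$, whereas the paper observes that $A_z-z_1I$ has rank $\leq 1$ while a cyclic operator on $\K^3$ must have rank $\geq 2$.
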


Question~F4 admits an affirmative answer by means of the following
proposition.

\begin{proposition}\label{F4} There exist positive numbers
$a_1,a_2,a_3$ and non-zero real numbers $b_1,b_2,b_3$ such that the
commuting triple $\T=(T_1,T_2,T_3)$ of non-diagonalizable $($even
when considered as members of $L(\C^2))$ operators
$T_j=\begin{pmatrix}a_j&b_j\\
0&a_j\end{pmatrix}$ on $\R^2$ has the following properties$:$
\begin{itemize}\itemsep-2pt
\item[\rm (a1)]$\T$ is non-hypercyclic$;$
\item[\rm (a2)]the orbit of $x=(x_1,x_2)\in\R^2$ with respect to
$\T$ is contained and is dense in the half-plane
$\Pi=\{(s,t)\in\R^2:t>0\}$ if $x_2>0$.
\end{itemize}
\end{proposition}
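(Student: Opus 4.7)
The plan is to use the nilpotent structure of the $T_j$ to obtain a closed form for the elements of $\F_\T$, then reduce (a1) and the inclusion part of (a2) to a sign observation, and finally reduce density in $\Pi$ to density of an explicit finitely generated additive subsemigroup of $\R^2$, which I would establish by Weyl equidistribution.

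First I would write $T_j=a_jI+b_jN$ with $N=\begin{pmatrix}0&1\\0&0\end{pmatrix}$; then every $T_j$ is a polynomial in $N$, so $\T$ commutes automatically, and $N^2=0$ yields
\[
T_1^{k_1}T_2^{k_2}T_3^{k_3}=\alpha(I+\beta N),\qquad \alpha=\prod_j a_j^{k_j},\quad \beta=\sum_j k_jb_j/a_j.
\]
Thus the orbit of $x=(x_1,x_2)^\top$ is $\{(\alpha(x_1+\beta x_2),\,\alpha x_2):k\in\Z_+^3\}$. Since $a_j>0$ forces $\alpha>0$, the second coordinate of every orbit point has the sign of $x_2$: the orbit lies in $\Pi$ if $x_2>0$, in $-\Pi$ if $x_2<0$, and in $\R x$ if $x_2=0$. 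Since none of these sets is $\R^2$, (a1) holds, and the inclusion part of (a2) is immediate.

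For density in $\Pi$, note that $(\alpha,\beta)\mapsto(\alpha(x_1+\beta x_2),\alpha x_2)$ is a homeomorphism $\R_{>0}\times\R\to\Pi$ when $x_2>0$, so density of the orbit in $\Pi$ is equivalent to density in $\R^2$ of the semigroup
\[
S=\Z_+u_1+\Z_+u_2+\Z_+u_3,\qquad u_j=(\log a_j,\,b_j/a_j).
\]
I would pick concretely $u_1=(1,1)$, $u_2=(0,1)$, $u_3=(-\sqrt{2},-\sqrt{3})$, giving $a_1=b_1=e$, $a_2=b_2=1$, $a_3=e^{-\sqrt{2}}$, $b_3=-\sqrt{3}\,e^{-\sqrt{2}}$, which meet all sign constraints. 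Given $(p,q)\in\R^2$ and $\varepsilon>0$, I would choose $k_3\in\N$ large with both $k_3\sqrt{2}+p$ and $k_3(\sqrt{3}-\sqrt{2})+(q-p)$ within $\varepsilon/3$ of integers, and take $k_1,k_2$ to be the corresponding nearest integers. A direct computation then shows $k_1u_1+k_2u_2+k_3u_3=(p-e_1,\,q-e_2)$ with $|e_1|,|e_2|<\varepsilon$, while $\sqrt{3}>\sqrt{2}$ forces $k_1,k_2>0$ for $k_3$ sufficiently large.

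The single substantive point is the simultaneous Diophantine approximation just used: for arbitrary targets, one needs the sequence $\{(k_3\sqrt{2},k_3(\sqrt{3}-\sqrt{2}))\bmod\Z^2\}_{k_3\in\N}$ to be dense in the torus $(\R/\Z)^2$. This follows from Weyl's equidistribution theorem because $1,\sqrt{2},\sqrt{3}-\sqrt{2}$ (equivalently $1,\sqrt{2},\sqrt{3}$) are linearly independent over $\Q$, a classical fact. All remaining verifications are straightforward.
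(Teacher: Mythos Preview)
Your proof is correct and follows essentially the same route as the paper: compute the orbit explicitly, observe the sign constraint to get (a1) and the inclusion in (a2), then pass via $(\alpha,\beta)\mapsto(\log\alpha,\beta)$ to reduce density in $\Pi$ to density of a three-generator additive subsemigroup of $\R^2$, which is obtained from Kronecker/Weyl. The only difference is cosmetic: the paper invokes its general Corollary~\ref{subs2} for the existence of the third generator, whereas you make the concrete choice $u_j=(\log a_j,b_j/a_j)=(1,1),(0,1),(-\sqrt2,-\sqrt3)$ and verify density directly using the $\Q$-linear independence of $1,\sqrt2,\sqrt3$.
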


\section{Dense additive subsemigroups of $\R^n$}

We start with the following trivial lemma, whose proof we give for
sake of completeness.

\begin{lemma}\label{subs1} Let $H$ be an additive subgroup of $\R^n$
with at most $n$ generators. Then $H$ is nowhere dense in $\R^n$.
\end{lemma}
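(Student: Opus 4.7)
The plan is to split into two cases according to whether the generators of $H$ span all of $\R^n$ over $\R$. Let $v_1,\dots,v_m$ be generators of $H$ with $m\leq n$, so that $H=\{k_1v_1+\dots+k_mv_m:k_j\in\Z\}$, and let $V=\spann_\R\{v_1,\dots,v_m\}$.

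If $\dim V<n$, then $V$ is a proper linear subspace of $\R^n$; such a subspace is closed and has empty interior, hence is nowhere dense. Since $H\subseteq V$, $H$ is nowhere dense as well.

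If $\dim V=n$, then necessarily $m=n$ and $v_1,\dots,v_n$ form an $\R$-basis of $\R^n$. The linear map $T\in L(\R^n)$ sending the standard basis to $v_1,\dots,v_n$ is then a homeomorphism of $\R^n$ onto itself carrying $\Z^n$ bijectively onto $H$. Since $\Z^n$ is a closed discrete subset of $\R^n$ (its complement is open and every point of $\Z^n$ is isolated), its image $H$ is also closed and discrete, and in particular has empty interior. Thus $H$ is nowhere dense. This exhausts the cases and gives the lemma.

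There is no real obstacle here; the whole content is the dichotomy ``proper subspace versus full-rank lattice'', and each alternative is nowhere dense for an elementary reason (empty-interior subspace in the first case, discrete closed set in the second).
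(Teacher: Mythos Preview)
Your proof is correct and follows exactly the same dichotomy as the paper's own proof: either the generators fail to span $\R^n$ and $H$ lies in a proper closed subspace, or they form a basis and $H$ is a discrete lattice. The only difference is cosmetic---you make the homeomorphism with $\Z^n$ explicit where the paper simply says ``discrete lattice''.
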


\begin{proof} Let $k\leq n$ and $x_1,\dots,x_k$ be generators of
$H$. If the linear span $L$ of $x_1,\dots,x_k$ differs from $\R^n$,
then $H$ is nowhere dense as a subset of the closed nowhere dense
set $L$. It remains to consider the case $L=\R^n$. Since $k\leq n$,
it follows that $k=n$ and the vectors $x_1,\dots,x_k$ form a basis
in $\R^n$. Hence $H$ is a discrete lattice in $\R^n$ and therefore
is nowhere dense in $\R^n$.
\end{proof}

\begin{lemma}\label{subs21} Let $\alpha_1,\dots,\alpha_n$ be $n$
positive numbers linearly independent over $\Q$. Then
$$
\Omega=\{(m_1-m_0\alpha_1,\dots,m_n-m_0\alpha_n):m_0,\dots,m_n\in\Z_+\}\
\ \ \text{is dense in $\R^n$.}
$$
\end{lemma}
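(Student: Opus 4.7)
The plan is to reduce density of $\Omega$ to Kronecker's classical simultaneous approximation theorem on the torus $\T^n=\R^n/\Z^n$. Unwinding definitions, $\Omega$ is dense in $\R^n$ if and only if for every target $t=(t_1,\dots,t_n)\in\R^n$ and every $\epsilon>0$ there exist $m_0,m_1,\dots,m_n\in\Z_+$ with $|m_i-m_0\alpha_i-t_i|<\epsilon$ for $1\leq i\leq n$. Equivalently, one must choose $m_0\in\Z_+$ so that each real number $m_0\alpha_i+t_i$ lies within $\epsilon$ of a non-negative integer.

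First I would separate the non-negativity constraint from the approximation condition. Since every $\alpha_i$ is positive, there exists $M$ with $m_0\alpha_i+t_i>1$ for all $m_0\geq M$ and all $i$; once this holds, the integer nearest $m_0\alpha_i+t_i$ is automatically positive, so setting $m_i$ equal to that nearest integer is admissible. What remains is to find arbitrarily large $m_0\in\Z_+$ for which the fractional parts of $m_0\alpha_1+t_1,\dots,m_0\alpha_n+t_n$ all lie in $[0,\epsilon)\cup(1-\epsilon,1)$ simultaneously, i.e.\ for which the orbit $\{m_0(\alpha_1,\dots,\alpha_n)\bmod \Z^n:m_0\in\Z_+\}$ visits an $\epsilon$-neighbourhood of $-(t_1,\dots,t_n)\bmod \Z^n$ in $\T^n$.

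The key step is then Kronecker's theorem: under the $\Q$-linear independence hypothesis (interpreted in the form needed for a torus orbit, with $1$ adjoined to the list of $\alpha_i$), this orbit is dense, indeed equidistributed, on $\T^n$, so infinitely many $m_0$ work and we may further require $m_0\geq M$. With such an $m_0$ fixed and $m_i$ chosen as the nearest non-negative integer to $m_0\alpha_i+t_i$, we obtain the required $(n+1)$-tuple.

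The main obstacle of the lemma really is just this interplay between density in the unbounded space $\R^n$ and the one-sided constraint $m_0,\dots,m_n\in\Z_+$; the positivity hypothesis $\alpha_i>0$ resolves it by trading ``$m_i\geq 0$'' for the harmless requirement ``$m_0$ large,'' after which Kronecker finishes the job.
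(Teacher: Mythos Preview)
Your proof is correct and is essentially the paper's own argument: both reduce to Kronecker's theorem on ${\mathbb T}^n=\R^n/\Z^n$ and use the positivity of the $\alpha_i$ to ensure that once $m_0$ is taken large the required integers $m_i$ are automatically non-negative. Your parenthetical about adjoining $1$ is well taken --- Kronecker's criterion for density of $\{m_0\alpha\bmod\Z^n:m_0\in\Z_+\}$ in ${\mathbb T}^n$ indeed requires $1,\alpha_1,\dots,\alpha_n$ to be $\Q$-linearly independent, a point the paper's formulation glosses over (as stated the lemma fails for $n=1$, $\alpha_1=1$), though this is harmless in the only application since the $\alpha_i$ there are chosen freely.
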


\begin{proof} Let $g=\alpha+\Z^n\in{\mathbb T}^n=\R^n/\Z^n$. Since the components of
$\alpha=(\alpha_1,\dots,\alpha_n)$ are linearly independent over
$\Q$, the classical Kronecker theorem implies that $\{mg:m\in\Z_+\}$
is dense in the compact metrizable topological group ${\mathbb
T}^n$. It follows that for every $x\in\R^n$, we can find a strictly
increasing sequence $\{m_{0,j}\}_{j\in\Z_+}$ in $\Z_+$ such that
$m_{0,j}g\to -x+\Z^n$. Hence we can pick sequences
$\{m_{l,j}\}_{j\in\Z_+}$ of integers for $1\leq l\leq n$ such that
$m_{l,j}-m_{0,j}\alpha_l\to x_l$ as $j\to\infty$ for $1\leq l\leq
n$. Since $m_{0,j}\to +\infty$ and $\alpha_l>0$, we have
$m_{l,j}\geq 0$ for all sufficiently large $j$. It follows that $x$
is an accumulation point of $\Omega$. Since $x\in\R^n$ is arbitrary,
$\Omega$ is dense in $\R^n$.
\end{proof}

\begin{lemma}\label{subs3} Let $x_1,\dots,x_n$ be a basis in $\R^n$
and $G$ be a finite abelian group $($carrying the discrete
topology$)$ with an $n+1$-element generating set
$\{g_0,\dots,g_{n}\}$. Then there is $x_0\in\R^n$ such that the
subsemigroup of $G\times\R^n$ generated by
$(g_0,x_0),\dots,(g_n,x_n)$ is dense in $G\times\R^n$.
\end{lemma}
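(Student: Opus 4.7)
The plan is to reduce to the standard-basis case, make a judicious choice of $x_0$ involving positive reals linearly independent over $\Q$, and then reduce density in $G\times \R^n$ to density of fibers over each $h\in G$, where Lemma~\ref{subs21} directly applies.

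First I would apply the invertible linear map $T\in L(\R^n)$ sending $x_j$ to the standard basis vector $e_j$ for $1\leq j\leq n$. The map $(g,x)\mapsto(g,Tx)$ is a homeomorphism of $G\times\R^n$ that takes the subsemigroup generated by $(g_0,x_0),\dots,(g_n,x_n)$ to the one generated by $(g_0,Tx_0),(g_1,e_1),\dots,(g_n,e_n)$, so it is enough to find $y_0\in\R^n$ for which the semigroup generated by $(g_0,y_0),(g_1,e_1),\dots,(g_n,e_n)$ is dense. Pick positive reals $\alpha_1,\dots,\alpha_n$ that are linearly independent over $\Q$ and set $y_0=-(\alpha_1,\dots,\alpha_n)$. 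The generated semigroup $\Sigma$ then equals
$$
\Bigl\{\Bigl(\sum_{j=0}^n k_jg_j,\,(k_1-k_0\alpha_1,\dots,k_n-k_0\alpha_n)\Bigr):k_0,\dots,k_n\in\Z_+\Bigr\}.
$$

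Since $G$ carries the discrete topology, $\Sigma$ is dense in $G\times\R^n$ if and only if for every $h\in G$ the fiber $F_h=\{(k_1-k_0\alpha_1,\dots,k_n-k_0\alpha_n):k\in S_h\}$ is dense in $\R^n$, where $S_h=\{k\in\Z_+^{n+1}:\sum_j k_jg_j=h\}$. Because $\{g_0,\dots,g_n\}$ generates $G$, every $S_h$ is non-empty; fix some $k^{(h)}\in S_h$. Let $d=|G|$, so $dg_j=0$ in $G$ for every $j$. The key observation is that $k^{(h)}+d\cdot m\in S_h$ for every $m\in\Z_+^{n+1}$. This sidesteps the non-negativity constraint that would block a naive ``differences in $\Z^{n+1}$'' argument and is really the only non-formal step.

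Substituting $k=k^{(h)}+dm$ into the formula defining $F_h$ shows that $F_h$ contains a fixed translate of
$$
d\cdot\bigl\{(m_1-m_0\alpha_1,\dots,m_n-m_0\alpha_n):m_0,\dots,m_n\in\Z_+\bigr\}.
$$
By Lemma~\ref{subs21} the braced set is dense in $\R^n$, and since scaling by $d\neq 0$ and translation by a fixed vector are homeomorphisms of $\R^n$, $F_h$ is dense in $\R^n$ for every $h\in G$, finishing the proof. The only mildly delicate point is passing to the sub-coset $k^{(h)}+d\Z_+^{n+1}\subseteq S_h$, which is forced on us by the semigroup (rather than group) nature of the problem; everything else is a direct application of Lemma~\ref{subs21}.
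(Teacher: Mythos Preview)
Your proof is correct and follows essentially the same approach as the paper: choose $x_0=-\sum_j\alpha_j x_j$ with $\alpha_j>0$ linearly independent over $\Q$, and for each $h\in G$ use a fixed representation of $h$ plus multiples of $|G|$ in each coordinate to embed a translate of a dilate of the set from Lemma~\ref{subs21} into the $h$-fiber. The only cosmetic difference is that you first reduce to the standard basis via a linear isomorphism, whereas the paper works directly in the coordinates determined by $x_1,\dots,x_n$.
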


\begin{proof} For the sake of homogeneity, we use the additive
notation for the operation on $G$. Let $\alpha_1,\dots,\alpha_n$ be
positive numbers linearly independent over $\Q$ and
$x_0=-(\alpha_1x_1+{\dots}+\alpha_nx_n)$. It suffices to show that
the subsemigroup $H$ of $G\times\R^n$ generated by
$(g_0,x_0),\dots,(g_n,x_n)$ is dense in $G\times\R^n$. Let $m$ be
the order of $G$ and $h\in G$. Pick positive integers
$j_0,\dots,j_n$ such that $h=j_0g_0+{\dots}+j_ng_n$. Then
$$
\text{$H_h\subset H$, where}\ \ \
H_h=\{(j_0+k_0m)(g_0,x_0)+{\dots}+(j_n+k_nm)(g_n,x_n):k_j\in\Z_+\}.
$$
Denote $y=j_0x_0+{\dots}+j_nx_n$. Using the equalities
$h=j_0g_0+{\dots}+j_ng_n$, $mg_l=0$ for $0\leq l\leq n$ and the
above display, we obtain
$$
H_h=\{(h,y+m((k_1-\alpha_1k_0)x_1+{\dots}+(k_n-\alpha_nk_0)x_n)):k_j\in\Z_+\}.
$$
By Lemma~\ref{subs21}, $H_h$ is dense in $\{h\}\times \R^n$. Since
$h$ is an arbitrary element of $G$ and $H_h\subset H$, $H$ is dense
in $G\times \R^n$.
\end{proof}

Applying Lemma~\ref{subs3} in the case $G=\{0\}$ and
$g_0={\dots}=g_n=0$, we get the following corollary.

\begin{corollary}\label{subs2} Let $x_1,\dots,x_n$ be a basis in $\R^n$.
Then there exists $x_0\in\R^n$ such that the additive subsemigroup
$H$ in $\R^n$ generated by $\{x_0,\dots,x_n\}$ is dense in $\R^n$.
\end{corollary}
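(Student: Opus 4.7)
The plan is to derive this corollary as a direct specialization of Lemma~\ref{subs3}. The key observation is that the lemma was stated in enough generality to cover precisely this situation when the group $G$ is trivial.

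Concretely, I would take $G=\{0\}$, the trivial abelian group (finite and abelian), and set $g_0=g_1=\dots=g_n=0$. The set $\{g_0,\dots,g_n\}$ then generates $G$ tautologically, and $x_1,\dots,x_n$ is by hypothesis a basis of $\R^n$, so the assumptions of Lemma~\ref{subs3} are in force. The lemma then supplies $x_0\in\R^n$ such that the subsemigroup of $G\times\R^n$ generated by $(g_0,x_0),(g_1,x_1),\dots,(g_n,x_n)$ is dense in $G\times\R^n$. Identifying $G\times\R^n=\{0\}\times\R^n$ with $\R^n$ in the obvious way carries this subsemigroup onto the additive subsemigroup $H$ of $\R^n$ generated by $x_0,x_1,\dots,x_n$, which is therefore dense in $\R^n$.

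Since the entire argument is a one-line specialization of a lemma already proved, there is no real obstacle to overcome; the analytic content is carried by Lemma~\ref{subs21} (where Kronecker's theorem supplies the density in the torus) and absorbed into Lemma~\ref{subs3}. As a sanity check, one could alternatively unwind the argument directly without any group in sight: choose positive $\alpha_1,\dots,\alpha_n$ linearly independent over $\Q$, set $x_0=-(\alpha_1x_1+\dots+\alpha_nx_n)$, observe that a general element of the semigroup takes the form $m_0x_0+\sum_{l=1}^n m_lx_l=\sum_{l=1}^n(m_l-m_0\alpha_l)x_l$, and invoke Lemma~\ref{subs21} together with the fact that $x_1,\dots,x_n$ is a basis to conclude density.
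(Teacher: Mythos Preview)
Your proposal is correct and matches the paper's own argument exactly: the paper also derives the corollary by applying Lemma~\ref{subs3} with $G=\{0\}$ and $g_0=\dots=g_n=0$. Your optional direct unwinding via Lemma~\ref{subs21} is simply the content of Lemma~\ref{subs3} specialized to this case, so it adds nothing new but is a valid sanity check.
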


\section{Properties of the exponential map}

Throughout this section
\begin{align*}
&\text{$\A$ is a finite dimensional unital commutative algebra over
$\K$}
\\
&\qquad\text{and $e^a=\sum\limits_{n=0}^\infty \frac{a^n}{n!}\in \A$
for $a\in\A$.}
\end{align*}
The symbol $\expa$ stands for the map $a\mapsto e^a$ from $\A$ to
itself. Since $\A$ is commutative, $e^{a+b}=e^ae^b$ and $e^a$ is
invertible with the inverse $e^{-a}$ for any $a,b\in \A$. Thus
$\expa$ is a homomorphism from the additive group $(\A,+)$ to the
(abelian) group $(\A^\star,\cdot)$ of invertible elements of $\A$.
Since $\expa$ is a smooth map with the Jacobian having rank $\dim\A$
at every point, $\expa$ is a local diffeomorphism and therefore a
local homeomorphism from $\A$ to $\A$. The following lemma
represents a well-known fact. We sketch its proof for the sake of
convenience.

\begin{lemma}\label{cyc1} Let $\K=\C$ and $\Omega$ be the set of all
characters on $\A$. Then there are $p_\chi\in\A$ for $\chi\in\Omega$
such that
\begin{itemize}\itemsep-2pt
\item[\rm(a)]$p_\chi p_\phi=\delta_{\chi,\phi}p_\chi$ for every
$\chi,\phi\in\Omega$ and $\sum\limits_{\chi\in\Omega}p_\chi={\bf
1};$
\item[\rm(b)] the spectrum of $ap_\chi$ in the subalgebra $\A_\chi=p_\chi
\A$ is $\{\chi(a)\}$ for every $a\in\A$ and $\chi\in\Omega$.
\end{itemize}
Moreover, conditions {\rm (a)} and {\rm (b)} determine the
idempotents $p_\chi$ uniquely.
\end{lemma}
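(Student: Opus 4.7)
The plan is to invoke the Wedderburn-type structure theorem for finite-dimensional commutative $\C$-algebras. First I would note that $\A$ is Artinian, so its Jacobson radical $J$ is nilpotent and $\A/J$ is semisimple commutative; since $\C$ is algebraically closed, every simple summand equals $\C$, giving $\A/J\cong\C^{|\Omega|}$, and the maximal ideals of $\A$ are precisely the kernels $M_\chi=\ker\chi$ for $\chi\in\Omega$.

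To produce the idempotents, I would apply the Chinese Remainder Theorem. The ideals $M_\chi^m$ are pairwise comaximal, and for $m$ large enough $\bigcap_{\chi\in\Omega}M_\chi^m=J^m=0$, so $\A\cong\prod_{\chi\in\Omega}\A_\chi$ as algebras, where $\A_\chi:=\A/M_\chi^m$ is a finite-dimensional local $\C$-algebra with nilpotent maximal ideal and residue field $\C$ realised by $\chi$. Let $p_\chi\in\A$ denote the element mapped to the tuple with $1$ in the $\chi$-slot and $0$ elsewhere. Property (a) is then immediate from the product decomposition. For (b), $ap_\chi$ is supported in the $\chi$-factor, where it differs from $\chi(a)p_\chi$ by an element of the nilpotent maximal ideal; consequently the spectrum of $ap_\chi$ inside the subalgebra $p_\chi\A$ (regarded as unital with identity $p_\chi$) reduces to the singleton $\{\chi(a)\}$.

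For uniqueness, I would first extract from (a) and (b) the values $\chi(p_\phi)=\delta_{\chi,\phi}$: since $p_\phi$ is idempotent, $\chi(p_\phi)\in\{0,1\}$, and if $\chi(p_\phi)=1$ then $\chi$ restricts to a character on $p_\phi\A$ whose value at each $ap_\phi$ must lie in the spectrum $\{\phi(a)\}$, forcing $\chi(a)=\phi(a)$ for all $a\in\A$. Now let $(q_\chi)_{\chi\in\Omega}$ be another family satisfying (a) and (b). For $\chi\neq\phi$, the product $p_\chi q_\phi$ is an idempotent lying in both local subalgebras $p_\chi\A$ and $q_\phi\A$, whose only idempotents are $0$ and the respective identities $p_\chi,q_\phi$; the remaining option $p_\chi q_\phi=p_\chi=q_\phi$ is excluded by $\chi(p_\chi)=1\neq 0=\chi(q_\phi)$, so $p_\chi q_\phi=0$. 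Summing over $\phi$ yields $p_\chi=p_\chi{\bf 1}=p_\chi q_\chi$, and by symmetry $q_\chi=p_\chi q_\chi$, hence $p_\chi=q_\chi$. The main delicacy I anticipate is correctly parsing condition (b), whose spectrum is taken inside $p_\chi\A$ with its own identity $p_\chi\neq{\bf 1}$; once this is in hand, the rest is a routine application of commutative algebra.
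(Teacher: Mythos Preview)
Your argument is correct, but it follows a genuinely different route from the paper's sketch. The paper works analytically: it selects an element $a\in\A$ separating the characters, defines each $p_\chi$ as a Riesz projection via a contour integral $\frac{1}{2\pi i}\oint_{\Gamma_\chi}(a-\zeta\mathbf{1})^{-1}\,d\zeta$, and then checks (a) and (b) by the usual holomorphic functional calculus manipulations; uniqueness is obtained by showing that any family satisfying (a) and (b) must reproduce that same integral. You instead run the purely algebraic structure theory: nilpotency of the Jacobson radical, identification of maximal ideals with kernels of characters, and the Chinese Remainder Theorem to split $\A$ as a product of local factors, with the $p_\chi$ arising as the corresponding central idempotents. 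Your uniqueness argument, via $\chi(p_\phi)=\delta_{\chi,\phi}$ and the fact that condition~(b) forces each $p_\chi\A$ to be local (hence to have only trivial idempotents), is also different from the paper's and is self-contained. The trade-off: the paper's approach stays within the Banach-algebra toolkit already in play elsewhere in the article and makes the later ``$a\in K\Leftrightarrow a$ is a $2\pi i\Z$-combination of the $p_\chi$'' step in Lemma~\ref{expc} transparent, while your approach avoids any analysis and would port unchanged to commutative Artinian algebras over an arbitrary algebraically closed field. One small point worth making explicit in your write-up is the sentence linking condition~(b) to locality of $p_\chi\A$ (every element has one-point spectrum, hence a unique character, hence a unique maximal ideal), since your uniqueness step leans on that.
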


{\it Sketch of the proof.} \ By (\ref{kappa}), $\Omega$ is a finite
set. Since $\Omega$ is linearly independent, we can pick $a\in\A$
such that the numbers $\chi(a)$ for $\chi\in \Omega$ are pairwise
different. For $\chi\in\Omega$ we take a circle $\Gamma_\chi$ on the
complex plane centered at $\chi(a)$ and such that $\phi(a)$ for
$\phi\neq \chi$ are all outside the closed disk encircled by
$\Gamma_\chi$. Since $\sigma(a)=\{\phi(a):\phi\in\Omega\}$,
$\Gamma_\chi\cap \sigma(a)=\varnothing$ and we can consider
$$
p_\chi=\frac1{2\pi i}\oint_{\Gamma_\chi}(a-\zeta{\bf
1})^{-1}\,d\zeta\in \A,
$$
where $\Gamma_\chi$ is encircled counterclockwise. In exactly the
same way as when dealing with the Riesz projections, it is
straightforward to see that $p_\chi$ are pairwise orthogonal
idempotents and form a partition of the identity. Thus (a) is
satisfied. It is also a routine exercise to check that
$b-\sum\limits_{\chi\in\Omega}\chi(b)p_\chi$ is nilpotent for every
$b\in\A$, which implies (b). The uniqueness is also standard: one
has to check that any $p_\chi$ satisfying (a) and (b) must actually
satisfy the above display as well. \qed

\begin{lemma}\label{expc} Let $\K=\C$ and $k=\kappa(\A)$. Then the
homomorphism $\expa:\A\to\A^\star$ is a surjective local
homeomorphism, whose kernel \ $\ker \expa$ is a subgroup of $(\A,+)$
generated by $k$ linearly independent elements.
\end{lemma}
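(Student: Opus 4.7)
The plan is to reduce to the case of a local algebra by splitting $\A$ into the blocks $\A_\chi = p_\chi \A$ for $\chi\in\Omega$ furnished by Lemma~\ref{cyc1}. Part (b) of that lemma shows that the spectrum of every $b \in \A_\chi$ inside $\A_\chi$ is $\{\chi(b)\}$, so each $\A_\chi$ is a finite-dimensional commutative local $\C$-algebra with unit $p_\chi$ and maximal ideal $\mathfrak{n}_\chi$ consisting of nilpotents; every $b \in \A_\chi$ decomposes uniquely as $\chi(b) p_\chi + n$ with $n \in \mathfrak{n}_\chi$, and the group of invertibles splits as $\A^\star = \prod_\chi \A_\chi^\star$, where $\A_\chi^\star = \{\mu p_\chi + m : \mu \in \C\setminus\{0\},\ m \in \mathfrak{n}_\chi\}$. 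The first step will be to verify the block identity
$$
\expa(a) \;=\; \sum_{\chi\in\Omega} \exp_{\A_\chi}(a_\chi) \qquad\text{for } a = \sum_\chi a_\chi,\ a_\chi \in \A_\chi,
$$
where $\exp_{\A_\chi}$ is the exponential series computed inside $\A_\chi$ (so its constant term is $p_\chi$, not $\mathbf{1}$). This follows from $a_\chi^0 = \mathbf{1} = \sum_\phi p_\phi$ while $a_\chi^k \in \A_\chi$ for $k \geq 1$, together with orthogonality of the $p_\chi$. The identity reduces both the surjectivity and the kernel computation to each block $\A_\chi$ separately.

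Inside a single block $\A_\chi$, I would write $a = \lambda p_\chi + n$ with $\lambda = \chi(a)$ and $n \in \mathfrak{n}_\chi$, and then factor $\exp_{\A_\chi}(a) = e^\lambda p_\chi \cdot \exp_{\A_\chi}(n)$; here $\exp_{\A_\chi}(n) = p_\chi + n + n^2/2! + \cdots$ is a finite sum since $n$ is nilpotent. The truncated logarithm $\log_{\A_\chi}(p_\chi + n') = n' - (n')^2/2 + \cdots$ furnishes a two-sided polynomial inverse between $\mathfrak{n}_\chi$ and $p_\chi + \mathfrak{n}_\chi$. Given any $u = \mu p_\chi + m \in \A_\chi^\star$, picking $\lambda \in \C$ with $e^\lambda = \mu$ and setting $n = \log_{\A_\chi}(p_\chi + m/\mu)$ yields $\exp_{\A_\chi}(\lambda p_\chi + n) = u$, so $\exp_{\A_\chi}$ maps onto $\A_\chi^\star$. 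Conversely, $\exp_{\A_\chi}(\lambda p_\chi + n) = p_\chi$ forces $e^\lambda = 1$ (read off the scalar part) and $n = 0$ (by injectivity of the truncated log), so $\ker \exp_{\A_\chi} = 2\pi i\, \Z \cdot p_\chi$.

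Reassembling the blocks will give surjectivity of $\expa$ onto $\A^\star$ together with $\ker \expa = \bigoplus_{\chi\in\Omega} 2\pi i\,\Z\, p_\chi$, a free abelian group of rank $k = \kappa(\A)$ generated by the $k$ elements $\{2\pi i\, p_\chi\}_{\chi\in\Omega}$, which are $\C$-linearly --- and hence in particular $\Z$-linearly --- independent by Lemma~\ref{cyc1}(a). The local-homeomorphism assertion is already recorded in the paragraph preceding the lemma and requires no further work. The main bookkeeping hazard, which the block formula above is designed to absorb, is the mismatch between the unit $\mathbf{1}$ of $\A$ and the unit $p_\chi$ of each block $\A_\chi$; once this is handled, the surjectivity and kernel claims follow from the elementary local-algebra computations sketched above.
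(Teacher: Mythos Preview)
Your argument is correct and reaches the same conclusion as the paper, with the same generating set $\{2\pi i\,p_\chi:\chi\in\Omega\}$ for the kernel. The routes differ mainly in the surjectivity step. The paper handles surjectivity globally: given $a\in\A^\star$ it chooses a ray $L$ missing $\sigma(a)$, takes a holomorphic branch $\phi$ of $\log$ on $\C\setminus L$, and sets $b=\phi(a)$ via the holomorphic functional calculus, so that $e^b=a$ in one stroke. You instead push the block decomposition of Lemma~\ref{cyc1} all the way through and work purely algebraically inside each local factor $\A_\chi$, using the truncated logarithm on the nilpotent part; this avoids functional calculus entirely at the cost of the bookkeeping about units $\mathbf{1}$ versus $p_\chi$ that you flagged. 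For the kernel, the paper appeals to the linear-algebra fact that $e^S=I$ for $S\in L(\C^n)$ iff $S$ is diagonalizable with $\sigma(S)\subseteq 2\pi i\Z$, and then reads off from Lemma~\ref{cyc1} that such $a$ are exactly the $2\pi i\Z$-combinations of the $p_\chi$; your blockwise computation gives the same answer more explicitly. Either approach is fine; yours is more elementary, the paper's is shorter for surjectivity.
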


\begin{proof} We already know that $\expa$ is a local homeomorphism.
Let us verify that $\expa:\A\to\A^\star$ is surjective. Indeed, let
$a\in\A^\star$. Since $0\notin\sigma(a)$, we can pick
$w\in\C\setminus\{0\}$ such that the ray $L=\{tw:t\geq 0\}$ does not
meet $\sigma(a)$. Now let $\phi:\C\setminus L\to\C$ be a branch of
the logarithm function. Since $\phi$ is holomorphic on the open set
$\C\setminus L$ containing the spectrum of $a$ we can define
$b=\phi(a)$ in the standard holomorphic functional calculus sense.
Since $e^{\phi(z)}=z$ for every $z\in\C\setminus L$, it follows that
$e^b=a$. Hence $\expa:\A\to\A^\star$ is surjective.

Now let $\Omega$ be the set of characters on $\A$ and $p_\chi$ for
$\chi\in\Omega$ be the idempotents provided by Lemma~\ref{cyc1}. It
is well-known and easy to see that the exponential of a linear map
$S$ on $\C^n$ is the identity if and only if $S$ is diagonalizable
and $\sigma(S)\subseteq 2\pi i\Z$. Thus $a\in\A$ belongs to $K=\ker
\expa$ if and only if the linear map $x\mapsto ax$ on $\A$ is
diagonalizable and its spectrum is contained in $2\pi i\Z$. From the
conditions (a) and (b) of Lemma~\ref{cyc1} it follows that $a\in K$
if and only if $a$ is a linear combination of $p_\chi$ with
coefficients from $2\pi i\Z$. Thus the $k$ linearly independent
elements $2\pi i p_\chi$ for $\chi\in\Omega$ generate $K$ as a
subgroup of $(\A,+)$.
\end{proof}

The case $\K=\R$ turns out to be slightly more sophisticated. We
start with the following curious elementary lemma.

\begin{lemma}\label{expsq} Let $\K=\R$. Then for $a\in\A$ the
following statements are equivalent$:$
\begin{itemize}\itemsep-2pt
\item[\rm(i)]there is $b\in\A$ such that $a=e^b;$
\item[\rm(ii)]$a$ is invertible and there is $c\in\A$ such that $a=c^2.$
\end{itemize}
\end{lemma}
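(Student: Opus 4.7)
The direction (i) $\Rightarrow$ (ii) is immediate: given $a = e^b$, take $c = e^{b/2}$ and note that $a$ is invertible with inverse $e^{-b}$. I would dispatch this in one line.

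The interesting direction is (ii) $\Rightarrow$ (i). My plan is to pass to the complexification $\A_\C$, where Lemma~\ref{expc} already ensures surjectivity of the exponential onto invertibles, and then to descend back to $\A$ by a careful choice of logarithmic branches. Concretely, I would apply Lemma~\ref{cyc1} to $\A_\C$ to obtain idempotents $p_\chi$ indexed by the characters $\chi$ of $\A_\C$, and use that the complex conjugation on $\A_\C$ (whose fixed set is $\A$) permutes the characters by $\chi \mapsto \overline{\chi}$ with $\overline{p_\chi} = p_{\overline{\chi}}$.

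In each block $\A_\chi = p_\chi \A_\C$, Lemma~\ref{cyc1}(b) guarantees that $n_\chi := a p_\chi - \chi(a) p_\chi$ is nilpotent, while invertibility of $a$ gives $\chi(a) \neq 0$ for every $\chi$. So for any $\ell_\chi \in \C$ with $e^{\ell_\chi} = \chi(a)$, the element
$$
b_\chi = \ell_\chi p_\chi + \sum_{k=1}^{\infty} \frac{(-1)^{k+1}}{k}\left(\frac{n_\chi}{\chi(a)}\right)^{k}
$$
(in reality a finite sum, since $n_\chi$ is nilpotent) will satisfy $e^{b_\chi} = a p_\chi$ inside $\A_\chi$, and consequently $b := \sum_{\chi} b_\chi$ will satisfy $e^b = a$ in $\A_\C$. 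This part is a routine calculation that I would carry out using $e^{\ell_\chi p_\chi} = e^{\ell_\chi} p_\chi$ in $\A_\chi$ and the identity $\exp \circ \log = \mathrm{id}$ on $p_\chi + \text{nilpotent}$.

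The main obstacle, and the only step where I expect to use hypothesis (ii) beyond invertibility, is forcing $b \in \A$, that is, $\overline{b} = b$. Because $\overline{n_\chi} = n_{\overline{\chi}}$ and $\overline{\chi(a)} = \overline{\chi}(a)$, the nilpotent parts pair up under conjugation automatically, so consistency reduces to imposing $\ell_{\overline{\chi}} = \overline{\ell_\chi}$. For a genuinely complex pair $\chi \neq \overline{\chi}$ this places no real constraint, but for a real character $\chi = \overline{\chi}$ it forces $\ell_\chi \in \R$, which is possible if and only if $\chi(a) > 0$. This is exactly where the assumption $a = c^2$ enters: invertibility of $a$ forces $c$ to be invertible, hence $\chi(c) \in \R \setminus \{0\}$, hence $\chi(a) = \chi(c)^2 > 0$. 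The content of the argument is thus to identify positivity at the real characters as the sole obstruction to the existence of a logarithm in $\A$, and to observe that the square-root hypothesis is precisely what supplies it.
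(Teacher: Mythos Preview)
Your argument is correct, but it takes a different and considerably longer route than the paper's. The paper's proof of (ii)$\Rightarrow$(i) is a four-line trick: since $c$ is invertible in $\A_\C$, Lemma~\ref{expc} gives $d\in\A_\C$ with $e^d=c$; then $e^{d^\dagger}=(e^d)^\dagger=c^\dagger=c$ as well, so $e^{d+d^\dagger}=c^2=a$, and $d+d^\dagger\in\A$ automatically. The square hypothesis is used not to control signs at real characters but simply to let one take a logarithm of $c$ rather than of $a$, so that symmetrising the logarithm doubles it instead of destroying information.

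Your approach, by contrast, builds the logarithm of $a$ block by block via the idempotent decomposition of Lemma~\ref{cyc1}, and isolates positivity of $\chi(a)$ at real characters $\chi$ as the exact obstruction to a real logarithm. This is more laborious---you need the relation $p_\chi^\dagger=p_{\overline\chi}$, which follows from the uniqueness clause in Lemma~\ref{cyc1} but still requires a line of justification---yet it yields more: you have in effect shown directly that $\expa(\A)=\{a\in\A^\star:\chi(a)>0\ \text{for every real character}\ \chi\}$, which is precisely the set $M$ appearing later in Lemma~\ref{expr}. So your argument anticipates part of that lemma, whereas the paper keeps Lemma~\ref{expsq} self-contained and short, deferring the structural description of $\expa(\A)$ to Lemma~\ref{expr}.
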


\begin{proof} If $a=e^b$, then $a$ is invertible and
$a=(e^{b/2})^2$. Thus (i) implies (ii). Assume now that $a$ is
invertible and $a=c^2$ for some $c\in\A$. Then $c$ is also
invertible in $\A$ and therefore $c$ is invertible in the
complexification $\A_\C=\A\oplus i\A$. By Lemma~\ref{expc}, there is
$d\in\A_\C$ such that $e^d=c$. Consider the involution
$(x+iy)^\dagger=x-iy$ on $\A_\C$, where $x,y\in\A$. Then
$e^{d^\dagger}=(e^d)^\dagger=c^\dagger=c$. Thus
$e^{d+d^\dagger}=e^de^{d^\dagger}=c^2=a$. It remains to notice that
$d+d^\dagger\in\A$ to conclude that (ii) implies (i).
\end{proof}

\begin{lemma}\label{expr} Let $\K=\R$, $k=\kappa_0(\A)$ and
$m=\kappa_1(\A)$. Then $\expa:\A\to\A^\star$ is a local
homeomorphism and there is a finite subgroup $G$ of $\A^\star$
isomorphic to $\Z_2^{m}$ such that $\A^\star$ is the topological
internal direct product of its subgroups $\expa(\A)$ and $G$.
Furthermore, $\ker \expa$ is a discrete subgroup of $(\A,+)$
generated by $k$ linearly independent elements.
\end{lemma}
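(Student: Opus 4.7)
The plan is to reduce everything to the complex case by passing to the complexification $\A_\C = \A \oplus i\A$ and exploiting the involution $\tau(x+iy) = x - iy$ (written $^\dagger$ in the proof of Lemma~\ref{expsq}), whose fixed points form $\A$. The local homeomorphism claim is immediate from the Jacobian argument already recorded at the start of the section. Let $\Omega$ denote the characters of $\A_\C$ and let $p_\chi$ be the idempotents from Lemma~\ref{cyc1}. Since those idempotents are uniquely determined and $\chi \mapsto \overline{\chi}$ is an involution on $\Omega$, one gets $p_\chi^\dagger = p_{\overline{\chi}}$. I would partition $\Omega = \Omega_1 \sqcup \Omega_0^+ \sqcup \Omega_0^-$, where $|\Omega_1| = m$ (real characters) and $|\Omega_0^+| = k$ (one from each conjugate pair), and introduce $e_\chi = p_\chi$ for $\chi \in \Omega_1$ together with $e_\chi = p_\chi + p_{\overline{\chi}}$ and $J_\chi = i(p_\chi - p_{\overline{\chi}})$ for $\chi \in \Omega_0^+$; all of these lie in $\A$.

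Next I would use the decomposition $\A = \prod_\chi \A_\chi$ with $\A_\chi = e_\chi \A$. For $\chi \in \Omega_1$, $\A_\chi$ is a local real algebra with residue field $\R$, and Lemma~\ref{expsq} identifies the squares in $\A_\chi^\star$ as those elements with positive $\chi$-value; the sign of $\chi$ gives an index-$2$ subgroup. For $\chi \in \Omega_0^+$, the relation $J_\chi^2 = -e_\chi$ endows $\A_\chi$ with a complex structure turning it into a finite-dimensional complex commutative algebra; Lemma~\ref{expc} applied to that complex algebra then forces every element of $\A_\chi^\star$ to lie in the image of its exponential, and in particular to be a square. Combining these facts with the characterization $\expa(\A) = \{\text{squares in }\A^\star\}$ from Lemma~\ref{expsq}, I obtain that $\expa(\A)$ is an open subgroup of $\A^\star$ of index $2^m$, with transversal
$$G = \Bigl\{\sum_{\chi \in \Omega_1} \epsilon_\chi e_\chi + \sum_{\chi \in \Omega_0^+} e_\chi : \epsilon_\chi \in \{-1,+1\}\Bigr\} \cong \Z_2^m.$$
Since every $g \in G$ squares to ${\bf 1}$ and only the identity of $G$ lies in $\expa(\A)$, the internal direct product $\A^\star = \expa(\A) \cdot G$ is automatically topological, $G$ being finite and $\expa(\A)$ open.

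For the kernel I plan to apply Lemma~\ref{expc} to $\A_\C$: its exponential has kernel $\bigoplus_{\chi \in \Omega} 2\pi i \Z \cdot p_\chi$, and intersecting with $\A$ amounts to imposing $a^\dagger = a$. Writing $a = 2\pi i \sum_\chi n_\chi p_\chi$, the constraint becomes $n_\chi + n_{\overline{\chi}} = 0$, which kills the contributions from $\Omega_1$ and leaves one free integer per conjugate pair in $\Omega_0$. Thus $\ker \expa$ is freely generated by the $k$ elements $2\pi J_\chi = 2\pi i(p_\chi - p_{\overline{\chi}})$ for $\chi \in \Omega_0^+$; these are linearly independent over $\R$ because the $p_\chi$ are linearly independent in $\A_\C$, and discreteness is inherited from that of the kernel in the complex case.

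The main obstacle, such as it is, will be verifying that for $\chi \in \Omega_0^+$ the real algebra $\A_\chi$ genuinely carries a compatible complex structure, so that Lemma~\ref{expc} is applicable and all elements of $\A_\chi^\star$ are squares. This reduces to checking that $J_\chi$ lies in $\A_\chi$ and that $J_\chi^2 = -e_\chi$, both of which follow from the orthogonality $p_\chi p_{\overline{\chi}} = 0$ and the identity $p_\chi + p_{\overline{\chi}} = e_\chi$; once that is in hand, the rest of the argument is a routine bookkeeping exercise over the component algebras.
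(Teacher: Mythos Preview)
Your proposal is correct and shares the paper's overall scaffolding---complexify, use the idempotents $p_\chi$ from Lemma~\ref{cyc1}, exploit the involution $\dagger$ to pin down which combinations lie in $\A$, and compute $\ker\expa$ by intersecting the complex kernel with $\A$. The kernel computation and the definition of the transversal $G$ are essentially identical to the paper's.

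The one genuine difference is in how you identify $\expa(\A)$ with the set of invertible squares. The paper works globally: it defines $M=\{a\in\A:\chi(a)>0\ \text{for all real characters}\ \chi\}$, then shows $M=S(\A^\star)$ by taking a holomorphic square root on a dense open subset $M_0\subset M$ (where the complex characters avoid $(-\infty,0]$) and invoking an open--closed map argument to pass from $M_0$ to all of $M$. You instead decompose $\A=\prod_\chi\A_\chi$ and argue componentwise, the key move being that for a conjugate pair $\chi\in\Omega_0^+$ the element $J_\chi=i(p_\chi-p_{\overline\chi})$ satisfies $J_\chi^2=-e_\chi$, so $\A_\chi$ is a bona fide finite-dimensional commutative $\C$-algebra and Lemma~\ref{expc} applies directly to give surjectivity of its exponential. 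This bypasses the paper's density-plus-functional-calculus step entirely and is arguably cleaner; the price is that you must justify that the $J_\chi$-complex structure really makes $\A_\chi$ a unital commutative $\C$-algebra (it does: the map $a\mapsto p_\chi a$ identifies $(\A_\chi,J_\chi)$ with the $\C$-algebra $p_\chi\A_\C$), and that for real $\chi$ the squares in $\A_\chi^\star$ are exactly the elements with $\chi(a)>0$ (write $a=\chi(a)e_\chi+n$ with $n$ nilpotent and take square roots of each factor). Both points are routine, as you note.
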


\begin{proof}Since $\kappa(\A_\C)=2\kappa_0(\A)+\kappa_1(\A)=2k+m$,
we can enumerate the $(2k+m)$-element set $\Omega$  of characters on
$\A_\C$ in the following way: $\Omega=\{\chi_1,\dots,\chi_{2k+m}\}$,
where $\chi_{2k+j}(\A)=\R$ for $1\leq j\leq m$ and
$\chi_{m+j}(a)=\overline{\chi_j(a)}$ for $1\leq j\leq k$ for any
$a\in\A$. Consider the involution $a\mapsto a^\dagger$ on
$\A_\C=\A\oplus i\A$ defined by the formula $(b+ic)^\dagger=b-ic$
for $b,c\in\A$. The above properties of $\chi_j$ can be rewritten in
the following way:
\begin{equation}\label{bla}
\chi_{2k+j}(a^\dagger)=\overline{\chi_{2k+j}(a)}\ \ \text{and}\ \
\chi_{k+l}(a^\dagger)=\overline{\chi_l(a)}\ \ \text{for $a\in\A_\C$,
$1\leq j\leq m$ and $1\leq l\leq k$.}
\end{equation}
Let $p_j=p_{\chi_j}\in\A_\C$ for $1\leq j\leq 2k+m$ be the
idempotents provided by Lemma~\ref{cyc1} applied to the complex
algebra $\A_\C$. By (\ref{bla}), the idempotents
$p_{k+1}^\dagger,\dots,p_{2k}^\dagger,p_1^\dagger,\dots,p_k^\dagger,
p_{2k+1}^\dagger,\dots,p_{2k+m}^\dagger$ satisfy the conditions (a)
and (b) of Lemma~\ref{cyc1} in relation to the characters
$\chi_1,\dots,\chi_{2k+m}$. By the uniqueness part of
Lemma~\ref{cyc1}, $p_j=p_j^\dagger$ for $2k+1\leq j\leq 2k+m$ and
$p_j^\dagger=p_{k+j}$ for $1\leq j\leq k$. It immediately follows
that
\begin{equation}\label{cla1}
\text{$p_j\in\A$ for $2k+1\leq j\leq 2k+m$ and
$zp_j+\overline{z}p_{k+j}\in\A$ for $1\leq j\leq k$ and any
$z\in\C$.}
\end{equation}

As we have already observed in the proof of Lemma~\ref{expc}, the
kernel $K_0$ of the homomorphism $a\mapsto e^a$ from $\A_\C$ to
$\A_\C^\star$ is given by
\begin{equation*}
K_0=\{2\pi i(n_1p_1+{\dots}+n_{2k+m}p_{2k+m}):n_j\in\Z\}.
\end{equation*}
Clearly the kernel $K$ of $\expa$ satisfies $K=K_0\cap\A$. Let
$b=2\pi i(n_1p_1+{\dots}+n_{2k+m}p_{2k+m})$ with $n_j\in\Z$ be an
arbitrary element of $K_0$. By Lemma~\ref{cyc1},
$\chi_j(p_l)=\delta_{j,l}$ for $1\leq j,l\leq 2k+m$. Hence
$\chi_j(b)=2\pi i n_j$ for $1\leq j\leq 2k+m$. Since $b\in\A$ if and
only if $b=b^\dagger$, equalities (\ref{bla}) imply that $b\notin\A$
unless $n_j=0$ for $2k+1\leq j\leq 2k+m$ and $n_{k+j}=-n_j$ for
$1\leq j\leq k$. In the latter case $b\in\A$ since it is a linear
combination with integer coefficients of $2\pi i(p_j-p_{k+j})$ for
$1\leq j\leq k$, which all belong to $\A$ according to (\ref{cla1}).
Summarizing, we see that $K=K_0\cap \A$ is a subgroup of $(\A,+)$
generated by $k$ linearly independent vectors $2\pi i(p_j-p_{k+j})$
for $1\leq j\leq k$.

Next, according to (\ref{cla1}), for every
$\epsilon\in\{-1,1\}^{k+m}$,
$$
b_\epsilon=\sum_{j=1}^m
\epsilon_jp_{2k+j}+\sum_{j=1}^{k}\epsilon_{m+j}(p_j+p_{k+j})\in\A.
$$
Moreover, since $p_l$ are pairwise orthogonal idempotents forming a
partition of the identity, we easily see that $b_\epsilon={\bf 1}$
if $\epsilon_1={\dots}=\epsilon_{k+m}=1$ and $b_\epsilon
b_\delta=b_\alpha$ for every $\epsilon,\delta\in\{-1,1\}^{k+m}$,
where $\alpha_j=\epsilon_j\delta_j$ for $1\leq j\leq k+m$. It
immediately follows that
$G_0=\{b_\epsilon:\epsilon\in\{-1,1\}^{k+m}\}$ is a subgroup of
$\A^\star$ isomorphic to $\Z_2^{k+m}$. In particular,
$b_\epsilon^2={\bf 1}$ for each $\epsilon$. Hence $G$ is contained
in the kernel of the homomorphism $S:\A^\star\to \A^\star$,
$Sa=a^2$. First, we show that $G_0=\ker S$. Indeed, let $a\in\ker
S$. Then the square of the linear map $\widehat{a}(x)=ax$ on $\A_\C$
is the identity. It follows that $\widehat a$ is diagionalizable and
its spectrum is contained in $\{-1,1\}$. According to
Lemma~\ref{cyc1}, $a=\sum\limits_{j=1}^{2k+m}\alpha_jp_j$ with
$\alpha_j\in\{-1,1\}$. The relations $p_j=p_j^\dagger$ for $2k+1\leq
j\leq 2k+m$ and $p_j^\dagger=p_{k+j}$ for $1\leq j\leq k$ and linear
independence of $p_j$ imply now that $a$ belongs to $\A$ if and only
if $\alpha_j=\alpha_{k+j}$ for $1\leq j\leq k$. The latter means
that $a$ coincides with one of $b_\epsilon$. Thus $G_0=\ker S$. Now
let
$$
M=\{a\in\A:\chi_j(a)>0\ \ \text{for}\ \ 2k+1\leq j\leq 2k+m\}
$$
and
$$
G=\{b_\epsilon:\epsilon_{m+1}={\dots}=\epsilon_{m+k}=1\}.
$$
It is straightforward to see that $M$ is a subgroup of $\A^\star$,
that $M$ is a closed and open subset of $\A^\star$ and that $G$ is a
subgroup of $G_0$ isomorphic to $\Z_2^m$. Next observe that
$\A^\star$ is the algebraic and topological internal direct product
of its subgroups $G$ and $M$. Indeed, the equality $M\cap G=\{\bf
1\}$ is obvious. For $a\in\A^\star$, we put $\epsilon_j=1$ for
$m+1\leq j\leq m+k$, $\epsilon_j=1$ if $1\leq j\leq m$ and
$\chi_{2k+j}(a)>0$ and $\epsilon_j=-1$ if $1\leq j\leq m$ and
$\chi_{2k+j}(a)<0$. Then $b_\epsilon\in G$ and
$\chi_j(ab_\epsilon)=\chi_j(a)\chi_j(b_\epsilon)>0$ for $2k+1\leq
j\leq 2k+m$ and therefore $ab_\epsilon\in M$. It follows that $a$ is
the product of $ab_\epsilon\in M$ and $b_\epsilon\in G$. Thus
$\A^\star$ is the algebraic internal direct product of $G$ and $M$.
The fact that this product is also topological with $G$ carrying the
discrete topology easily follows from finiteness of $G$ and the fact
that $M$ is closed and open in $\A^\star$. In order to complete the
proof it remains to verify that $M=\expa(\A)$. By Lemma~\ref{expsq},
it suffices to show that $M=S(\A^\star)$. If $a\in S(\A^\star)$,
then $a=b^2$ with $b\in\A^\star$. Invertibility of $b$ implies that
$\chi_j(b)\in\R\setminus\{0\}$ for $2k+1\leq j\leq 2k+m$. Hence
$\chi_j(a)=\chi_j(b^2)=\chi_j(b)^2>0$ for $2k+1\leq j\leq 2k+m$.
Thus $a\in M$ and therefore $S(\A^\star)\subseteq M$. Now let
$\C_-=\C\setminus(-\infty,0]$ and
$$
M_0=\{a\in M:\chi_j(a)\in\C_-\ \ \text{for}\ \ 1\leq j\leq k\}.
$$
Clearly $M_0$ is an open subset of $M$. Moreover, $M_0$ is dense in
$M$. Indeed, it is easy to see that the map
$\Phi=(\chi_1,\dots,\chi_k):\A\to\C^k$ is surjective (otherwise the
characters $\chi_1,\dots,\chi_{2k}$ on $\A_\C$ are not linearly
independent). Hence the $\R$-linear map $\Phi:\A\to\C^k$ is open and
therefore $M_0=M\cap \Phi^{-1}(\C_-^k)$ is dense in $M$ since
$\C^k\setminus \C_-^k$ is nowhere dense in $\C^k$. Consider the
holomorphic branch $\psi:\C_-\to\C$ of the $\sqrt{z}$ function
satisfying $\psi(1)=1$ and let $a\in M_0$. Then
$\sigma(a)\subset\C_-$. By the usual holomorphic functional calculus
argument, we can take $b=\psi(a)\in\A_\C$. Then $b^2=a$ and since
$\psi$ is real on $(0,\infty)$, we have
$b=\psi(a)=\psi(a^\dagger)=b^\dagger$ and therefore $b\in\A$. Since
$a$ is invertible, $b\in\A^\star$ and $a=S(b)$. Thus $M_0\subseteq
S(\A^\star)$. Since the homomorphism $S:\A^\star\to M$ is a local
homeomorphism and $\ker S=G_0$ is finite, the map $S:\A^\star\to M$
is open and closed. In particular, $S(\A^\star)$ is a closed subset
of $M$ containing the dense subset $M_0$. Thus
$S(\A^\star)=\expa(\A)=M$.
\end{proof}

\section{Proof of Theorems~\ref{cyc2} and~\ref{cyc22}}

It is straightforward to see that if $G$ is a Hausdorff topological
group and $N$ is its discrete (=each point is isolated in $G$)
normal subgroup of $G$, then a subgroup $K$ of $G$ containing $N$ is
dense (respectively, nowhere dense) in $G$ if and only if $K/N$ is
dense (respectively, nowhere dense) in $G/N$. Since a homomorphism
$\phi:G\to G_0$ between Hausdorff topological groups $G$ and $G_0$
is a local homeomorphism if and only if $\ker\phi$ is discrete in
$G$ and the homomorphism $\widehat\phi:G/\ker\phi\to G_0$,
$\widehat\phi(g\ker\phi)=\phi(g)$ is a homeomorphism, we arrive to
the following fact.

\begin{lemma}\label{GH} Let $G$ and $H$ be topological groups,
$\phi:G\to H$ be a surjective homomorphism, which is also a local
homeomorphism and $K$ be a subgroup of $G$ containing $\ker\phi$.
Then $K$ is dense in $G$ if and only if $\phi(K)$ is dense in $H$.
Furthermore, $K$ is nowhere dense in $G$ if and only if $\phi(K)$ is
nowhere dense in $H$.
\end{lemma}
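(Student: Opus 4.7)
The plan is to factor $\phi$ through the quotient $G/\ker\phi$ and reduce the lemma to the two preliminary observations already spelled out in the paragraph preceding its statement.

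First, I would set $N=\ker\phi$ and let $\pi:G\to G/N$ be the quotient map. Since $\phi$ is simultaneously a homomorphism and a local homeomorphism, $N$ is a discrete normal subgroup of $G$: normality is automatic, while discreteness follows from the fact that on some neighbourhood of the identity $\phi$ is injective, so that neighbourhood meets $N$ only at the identity. The induced homomorphism $\widehat\phi:G/N\to H$ defined by $\widehat\phi(gN)=\phi(g)$ is a continuous bijection (surjectivity inherited from $\phi$, injectivity from the first isomorphism theorem) and open (because $\phi$ is open being a local homeomorphism and $\pi$ is open as a quotient by a normal subgroup of a topological group), hence a homeomorphism. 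This is precisely the identification recalled just before the lemma.

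Second, because $K\supseteq N$, I would invoke the other preliminary fact isolated in the passage preceding the lemma: a subgroup $K$ of a Hausdorff topological group $G$ that contains a discrete normal subgroup $N$ is dense (respectively nowhere dense) in $G$ if and only if $\pi(K)=K/N$ is dense (respectively nowhere dense) in $G/N$. Combining this with the homeomorphism $\widehat\phi$, which trivially preserves and reflects both density and nowhere-density, yields the chain
$$
K\text{ dense (resp.\ nowhere dense) in }G\iff\pi(K)\text{ dense (resp.\ nowhere dense) in }G/N\iff\phi(K)=\widehat\phi(\pi(K))\text{ dense (resp.\ nowhere dense) in }H,
$$
which is the content of the lemma.

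The only step that is not entirely formal is the preliminary fact about discrete normal subgroups in the nowhere-dense case; its essence is that $K$ is saturated with respect to $\pi$ (namely $K=\pi^{-1}(\pi(K))$ since $K\supseteq N$), so closures and interiors transport along the continuous open surjection $\pi$ in the way one needs. I do not anticipate a real obstacle; once the two observations are in place, the proof of the lemma is just an assembly of them.
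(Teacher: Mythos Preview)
Your proposal is correct and is precisely the argument the paper gives: the paragraph immediately preceding the lemma \emph{is} the paper's proof, and you have simply unpacked its two ingredients (discreteness of $\ker\phi$ plus the induced homeomorphism $\widehat\phi:G/\ker\phi\to H$, and the transfer of density/nowhere-density along the quotient by a discrete normal subgroup) and assembled them in the obvious way. There is nothing to add.
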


\begin{proof}[Proof of Theorem~$\ref{cyc2}$]
By Lemma~\ref{expc}, the kernel $K$ of the homomorphism $\expa:\A\to
\A^\star$ is a subgroup of $(\A,+)$ generated by $k=\kappa(\A)$
linearly independent elements $B_1,\dots,B_k$. By Lemma~\ref{cyc},
$\dimr \A=2n$. Pick $B_{k+1},\dots,B_{2n}\in \A$ in such a way that
$B_1,\dots,B_{2n}$ is a basis in $\A$ over $\R$. By
Corollary~\ref{subs2}, there is $B_0\in\A$ such that the additive
semigroup $H$ generated by $B_0,\dots,B_{2n}$ is dense in $\A$.
Since $\expa:\A\to\A$ is continuous and has dense in $\A$ range
$\A^\star$, $\expa(H)$ is dense in $\A$. Since $e^B_j=I$ for $1\leq
j\leq k$, we see that $\expa(H)=\expa(G)$, where $G$ is the
subsemigroup of $(\A,+)$ generated by
$\{B_0,B_{k+1},B_{k+2},\dots,B_{2n}\}$. Hence $\expa(G)$ is dense in
$\A$. Now let $x$ be a cyclic vector for $\A$. Then $\A
x=\{Ax:A\in\A\}=\C^n$. It follows that $\expa(G)x$ is dense in
$\C^n$. That is, $x$ is a hypercyclic vector for the $m$-tuple
$\{e^{B_0},e^{B_{k+1}},\dots,e^{B_{2n}}\}$ with $m=2n-k+1$ of
operators from $\A$. Thus $\A$ contains a hypercyclic $m$-tuple.

Now let $r$ be the minimal positive integer such that $\A$ contains
a hypercyclic $r$-tuple. It remains to show that $r\geq m$. Let
$\T=\{T_1,\dots,T_r\}\subset\A$ be a hypercyclic tuple and
$x\in\C^n$ be its hypercyclic vector. First, observe that each $T_j$
must be invertible. Indeed, if $1\leq j\leq r$ and $T_j$ is
non-invertible, then $L=T_j(\C^n)\neq\C^n$. Then for each
$q\in\Z_+^r$ with $q_j\neq 0$, $T_1^{q_1}\dots T_r^{q_r}x\in L$.
Since $L$, being a proper linear subspace of $\C^n$, is nowhere
dense and $x$ is a hypercyclic vector for $\T$, $\{T_1^{q_1}\dots
T_r^{q_r}x:q\in\Z_+^r,\ q_j=0\}$ is dense in $\C^n$. Hence
$\T\setminus\{T_j\}$ is a hypercyclic $(r-1)$-tuple, which
contradicts the minimality of $r$. Thus each $T_j$ is invertible.
Since $\expa:\A\to\A^\star$ is onto, we can find
$A_1,\dots,A_r\in\A$ such that $e^{A_j}=T_j$ for $1\leq j\leq r$.
Now let $M$ be the subgroup of $(\A,+)$ generated by
$A_1,\dots,A_r,B_1,\dots,B_k$. Clearly, $\expa(M)x$ contains the
$\T$-orbit of $x$ and therefore is dense in $\C^n$. Since $A\mapsto
Ax$ is an onto linear map from $\A$ to $\C^n$ and $\dim \A=n$, it is
a linear isomorphism. Hence the density of $\expa(M)x$ in $\C^n$
implies the density of $\expa(M)$ in $\A$ and therefore in
$\A^\star$. Since the additive subgroup $M$ of $\A$ contains the
kernel $K$ of $\expa$ and $\expa:\A\to\A^\star$ is a surjective
homomorphism and a local homeomorphism, Lemma~\ref{GH} provides the
density of $M$ in $\A$. Since $\dimr\A=2n$ and $M$ has $r+k$
generators, Lemma~\ref{subs1} implies that $r+k\geq 2n+1$ and
therefore $r\geq 2n-k+1=m$.
\end{proof}

\begin{proof}[Proof of Theorem~$\ref{cyc22}$]
By Lemma~\ref{expr}, the topological group $\A^\star$ is the
internal algebraic and topological direct product of its subgroups
$\expa(\A)$ and $G$, where $G$ is isomorphic to $\Z_2^m$ with
$m=\kappa_1(\A)$. Moreover, the kernel $K$ of the homomorphism
$\expa:\A\to \A^\star$ is a subgroup of $(\A,+)$ generated by
$k=\kappa_0(\A)$ linearly independent elements $B_1,\dots,B_k$. By
Lemma~\ref{cyc}, $\dimr \A=n$. Pick $B_{k+1},\dots,B_{n}\in \A$ in
such a way that $B_1,\dots,B_{n}$ is a basis in $\A$. According to
(\ref{kappa0}), $2k+m\leq n$ and therefore $m\leq n-2k\leq n-k$.
Since $\Z_2^m$  has an $m$-element generating set and $G$ is
isomorphic to $\Z_2^m$, we can pick $C_{k+1},\dots,C_n\in G$ such
that $\{C_{k+1},\dots,C_n\}$ is a generating subset of $G$. We also
set $C_j=I$ for $0\leq j\leq k$. By Corollary~\ref{subs2}, there is
$B_0\in\A$ such that the subsemigroup $N$ of $G\times\A$ generated
by $(C_0,B_0),\dots,(C_{2n},B_{2n})$ is dense in $G\times \A$. Since
$\A^\star$ is the internal algebraic and topological direct product
of its subgroups $\expa(\A)$ and $G$ and $\expa$ is a local
homeomorphism, the homomorphism $\Phi:G\times \A\to\A^\star$,
$\Phi(C,A)=Ce^A$ is surjective and is a local homeomorphism.
Moreover, $\ker\Phi=\{I\}\times K$. Then $\Phi(N)$ is dense in
$\A^\star$ and therefore is dense in $\A$. Since $C_j=I$ and
$e^{B_j}=I$ for $1\leq j\leq k$, $\Phi(N)$ is precisely $\F_\T$ for
$\T=(C_0e^{B_0},C_{k+1}e^{B_{k+1}},\dots,C_{n}e^{B_n})$. The density
of $\F_\T$ in $\A$ implies that each cyclic vector $x$ for $\A$ is
also a hypercyclic vector for $\T$, which happens to be an $r$-tuple
of elements of $\A$ with $r=n-k+1$.

Now let $p$ be the minimal positive integer such that $\A$ contains
a $p$-tuple with an orbit, which is not nowhere dense. It remains to
show that $p\geq r$. Let $\T=\{T_1,\dots,T_p\}\subset\A$ for which
there is $x\in\R^n$, whose orbit $O(\T,x)$ is not nowhere dense.
Exactly as in the proof of Theorem~\ref{cyc22}, minimality of $p$
implies that each $T_j$ is invertible. Let
$\T^{[2]}=\{T_1^2,\dots,T_p^2\}$. Then $O(\T,x)$ is the union of
$O(\T^{[2]},x_\epsilon)$, where $\epsilon\in\{0,1\}^p$ and
$x_\epsilon=T_1^{\epsilon_1}\dots T_p^{\epsilon_p}x$. Thus there is
$y\in\{x_\epsilon:\epsilon\in\{0,1\}^p\}$ such that $O(\T^{[2]},y)$
is not nowhere dense. Then $y$ is a cyclic vector for $\A$ and
therefore the map $T\mapsto Ty$ from $\A$ to $\R^n$ is onto. Since
$\dim\A=n$, this map is a linear isomorphism. Hence the semigroup
$\F_{\T^{[2]}}$ is not nowhere dense in $\A$. Thus the subgroup
$\F^0_{\T^{[2]}}$ of $\A^\star$ generated by $\T^{[2]}$ is also not
nowhere dense in $\A$. By Lemma~\ref{expsq}, there are
$A_1,\dots,A_p\in\A$ such that $T_j^2=e^{A_j}$ for $1\leq j\leq p$.
Let $Q$ be the additive subgroup of $\A$ generated by
$A_1,\dots,A_p,B_1,\dots,B_k$. Since $e^{A_j}=T_j^2$ for $1\leq
j\leq p$ and $e^{B_j}=I$ for $1\leq j\leq k$,
$\expa(Q)=\F^0_{\T^{[2]}}$ is a subgroup of $\A^\star$, which is not
nowhere dense. Since $\expa:\A\to \expa(\A)$ is a surjective
homomorphism and a local homeomorphism and $Q$ contains $\ker\expa$,
Lemma~\ref{GH} implies that $Q$ is not nowhere dense in $\A$. Since
$Q$ is generated by $p+k$ elements and $\dim\A=n$, Lemma~\ref{subs1}
implies that $p+k\geq n+1$ and therefore $p\geq n-k+1=r$.
\end{proof}

\section{Proof of Propositions~\ref{cy} and~\ref{F4}}

\begin{proof}[Proof of Proposition~$\ref{cy}$] Consider the $3$-dimensional
linear subspace $\A$ of $L(\K^3)$ defined as follows:
$$
\A=\{A_z:z\in\K^3\},\ \ \text{where}\ \
A_z=\begin{pmatrix}z_1&0&0\\
z_2&z_1&0\\
z_3&0&z_{1}
\end{pmatrix}
$$
It is easy to see that $\A$ is a commutative subalgebra of $L(\K^3)$
and $\A$ is cyclic with the cyclic vector $(1,0,0)$. Moreover, there
is exactly one character on $\A$: $A_z\mapsto z_1$. Hence
$\kappa(\A)=1$ if $\K=\C$ and $\kappa_0(\A)=0$, $\kappa_1(\A)=1$ if
$\K=\R$. According to Theorems~\ref{cyc2} and~\ref{cyc22}, $\A$
contains a hypercyclic $6$-tuple (respectively, $4$-tuple) $\T$ if
$\K=\C$ (respectively, $\K=\R$). Since any hypercyclic tuple of
operators on $\K^3$ is contained in precisely one commutative cyclic
subalgebra of $L(\K^3)$, which is the unital subalgebra genrated by
the tuple, $\A_\T=\A$. By Lemma~\ref{cyc}, any operator commuting
with $\T$ belongs to $\A$. Thus it suffices to show that $\A$
contains no cyclic operators. Indeed, assume that there is
$z\in\K^3$ such that $A_z$ is cyclic. Cyclicity of $A_z$ implies
cyclicity of $A_z-z_{1}I$. Since the last two columns of
$A_z-z_{1}I$ are zero, the rank of  $A_z-z_{1}I$ is at most $1$.
Since the rank of every cyclic operator on $\C^n$ is at least $n-1$,
we have arrived to a contradiction.
\end{proof}

\begin{proof}[Proof of Proposition~$\ref{F4}$]
Let $a_1,a_2,a_3>0$, $b_1,b_2,b_3\in\R$ and
$T_j=\begin{pmatrix}a_j&b_j\\ 0&a_j\end{pmatrix}\in L(\R^2)$ for
$1\leq j\leq 3$. It is straightforward to verify that for every
$x=(x_1,x_2)\in\R^2$ and each $n_1,n_2,n_3\in\Z_+$,
$$
T_1^{n_1}T_2^{n_2}T_3^{n_3}x=
a_1^{n_1}a_2^{n_2}a_3^{n_3}\bigl(x_1+x_2\bigl({\textstyle
\frac{b_1n_1}{a_1}+\frac{b_2n_2}{a_2}+\frac{b_3n_3}{a_3}}\bigr),x_2\bigr).
$$
Thus for $\T=(T_1,T_2,T_3)$, the $\T$-orbit $O(\T,x)$ of $x$ is
contained in the half-plane $\Pi=\{(s,t)\in\R^2:t>0\}$ provided
$x_2>0$. It remains to show that $a_j>0$ and
$b_j\in\R\setminus\{0\}$ can be chosen in such a way that $O(\T,x)$
is dense in $\Pi$ if $x_2>0$.

According to the above display, the density of $O$ in $\Pi$ is
equivalent to the density of
$$
N=\bigl\{\bigl({\textstyle
\frac{b_1n_1}{a_1}+\frac{b_2n_2}{a_2}+\frac{b_3n_3}{a_3}},
 a_1^{n_1}a_2^{n_2}a_3^{n_3}\bigr):n_1,n_2,n_3\in\Z_+\bigr\}
$$
in $\Pi$. Since the map $(t,s)\mapsto (t,\ln s)$ is a homeomorphism
from $\Pi$ onto $\R^2$, the density of $O$ in $\Pi$ is equivalent to
the density of
$$
N_0=\bigl\{{\textstyle n_1(\frac{b_1}{a_1},\ln a_1)+
n_2(\frac{b_2}{a_2},\ln a_2)+n_3(\frac{b_3}{a_3},\ln
a_3)}:n_1,n_2,n_3\in\Z_+\bigr\}
$$
in $\R^2$. Now choose any $a_1,a_2>0$ and
$b_1,b_2\in\R\setminus\{0\}$ such that the vectors
$(\frac{b_1}{a_1},\ln a_1)$ and $(\frac{b_2}{a_2},\ln a_2)$ are
linearly independent. By Corollary~\ref{subs2}, there are $a_3>0$
and $b_3\in\R\setminus\{0\}$ such that the additive subsemigroup
$N_0$ in $\R^2$ generated by $(\frac{b_1}{a_1},\ln a_1)$,
$(\frac{b_2}{a_2},\ln a_2)$ and $(\frac{b_3}{a_3},\ln a_3)$ is dense
in $\R^2$. For this choice of $a_j$ and $b_j$, the orbit $O(\T,x)$
is dense in $\Pi$ whenever $x_2>0$.
\end{proof}

\section{Remarks}

It is easy to extend Proposition~\ref{cy} to provide hypercyclic
tuples $\T$ on $\K^n$ with $n\geq 3$ such that there are no cyclic
operators commuting with $\T$. One can also produce an infinite
dimensional version. Namely, let $X$ be any infinite dimensional
separable Fr\'echet space over $\K$ and $Y$ be a closed linear
subspace of $X$ of codimension 3. Then $X$ can be naturally
interpreted as $Y\oplus \K^3$. Take the tuple $\T$ of operators on
$\K^3$ constructed in the proof of Proposition~\ref{cy}. Since every
separable infinite dimensional separable Fr\'echet space possesses a
mixing operator \cite{bama-book}, we can pick a mixing $S\in L(Y)$.
It is now easy to verify that the tuple $\T_0=\{S\oplus T:T\in\T\}$
is a hypercyclic tuple of operators on $Y\oplus \K^3=X$ consisting
of the same number of operators as $\T$. Take any $T\in\T$. Then
there is a unique $\lambda\in\K$ such that $(T-\lambda
I_{\K^3})^3=0$. Since $(S-\lambda I_Y)^3(Y)$ is dense in $Y$, we
have $Y=\overline{(T_0-\lambda I_X)^3(X)}$, where $T_0=S\oplus T$.
Since $T_0\in\T_0$, the equality $Y=\overline{(T_0-\lambda
I_X)^3(X)}$ implies that $Y$ is invariant for every operator in
$L(X)$ commuting with $\T_0$. By factoring $Y$ out, we see that the
existence of a cyclic operator on $X$ commuting with $\T_0$ implies
the existence of a cyclic operator on $\K^3$ commuting with $\T$.
Since the latter does not exist, we arrive to the following result.

\begin{proposition}\label{frec} Every infinite dimensional separable
complex $($respectively, real$\,)$ Fr\'echet space admits a
hypercyclic $6$-tuple $($respectively, a $4$-tuple$)$ $\T$ of
operators such that there are no cyclic operators commuting with
$\T$.
\end{proposition}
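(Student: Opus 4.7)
The plan is to execute the construction sketched in the paragraph preceding the statement. First, fix a closed subspace $Y\subset X$ of codimension $3$ and identify $X=Y\oplus\K^3$; let $\T=(T_1,\dots,T_d)\subset L(\K^3)$ be the hypercyclic tuple delivered by Proposition~\ref{cy} (with $d=6$ when $\K=\C$ and $d=4$ when $\K=\R$); and invoke \cite{bama-book} to pick a mixing operator $S\in L(Y)$, chosen with the extra property that $S-\lambda I_Y$ has dense range at each of the finitely many eigenvalues $\lambda$ occurring among the $T_j$. The candidate tuple is $\T_0=\{S\oplus T_j:1\leq j\leq d\}$, which has the required cardinality.

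The next step is to verify that $\T_0$ is hypercyclic. Since $\F_{\T_0}$ is countable and $X$ is a separable Baire space, it suffices to prove topological transitivity of $\F_{\T_0}$. Given non-empty open boxes $U_1\times V_1$ and $U_2\times V_2$ in $Y\oplus\K^3$, hypercyclicity of $\T$ makes the set $K=\{k\in\Z_+^d:T_1^{k_1}\dots T_d^{k_d}(V_1)\cap V_2\neq\varnothing\}$ infinite, so $|k|=k_1+\dots+k_d$ is unbounded on $K$; mixing of $S$ supplies $N$ such that $S^n(U_1)\cap U_2\neq\varnothing$ for every $n\geq N$. Taking any $k\in K$ with $|k|\geq N$ then witnesses $(S\oplus T_1)^{k_1}\dots(S\oplus T_d)^{k_d}(U_1\times V_1)\cap(U_2\times V_2)\neq\varnothing$, as required.

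The third step is to show that no cyclic operator commutes with $\T_0$. Each $T\in\T$ lies in the algebra $\A$ of Proposition~\ref{cy} and has a unique eigenvalue $\lambda\in\K$ with $(T-\lambda I_{\K^3})^3=0$, so $(S\oplus T-\lambda I_X)^3(X)=(S-\lambda I_Y)^3(Y)\oplus\{0\}$; under the dense-range arrangement of $S$ (which passes to the cube by iterating continuity), the closure of this image equals $Y$. Any $C\in L(X)$ commuting with $\T_0$ commutes with each $(S\oplus T-\lambda I_X)^3$, and continuity forces $C(Y)\subseteq Y$. Quotienting, $C$ induces $\widetilde C\in L(X/Y)\cong L(\K^3)$ commuting with the image tuple, which is canonically identified with $\T$; the image of any cyclic vector of $C$ under the quotient map is cyclic for $\widetilde C$, contradicting Proposition~\ref{cy}.

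The main obstacle is the auxiliary condition on $S$: the mixing operator must be chosen so that $S-\lambda I_Y$ has dense range at a prescribed finite set of values $\lambda$. This is a mild strengthening of mixing, satisfied by standard constructions (for instance a Rolewicz-type multiple of a backward shift on a biorthogonal system in $Y$, whose transpose has empty point spectrum). Granted this, the three steps above slot together and the remaining verifications are routine.
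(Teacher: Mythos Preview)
Your proposal is correct and follows the paper's own argument almost verbatim: split $X=Y\oplus\K^3$, lift the tuple of Proposition~\ref{cy} by a mixing $S\in L(Y)$, check hypercyclicity via transitivity, and use $Y=\overline{(S\oplus T-\lambda I_X)^3(X)}$ to force $Y$ to be invariant for any commuting $C$, then quotient.

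The one place you diverge from the paper is your ``main obstacle''. You insist on choosing $S$ so that $S-\lambda I_Y$ has dense range at the finitely many eigenvalues of the $T_j$, and then appeal to a special Rolewicz-type construction. This precaution is unnecessary: on a (locally convex) Fr\'echet space, any hypercyclic operator---hence in particular any mixing one---has the property that $S-\lambda I$ has dense range for \emph{every} scalar $\lambda$. Indeed, if the range were not dense, Hahn--Banach would produce a nonzero $f\in Y'$ with $S^*f=\lambda f$, and then $\{f(S^ny):n\in\Z_+\}=\{\lambda^n f(y)\}$ could never be dense in $\K$, contradicting hypercyclicity of $S$. Iterating (a continuous map with dense range sends dense sets to dense sets) gives that $(S-\lambda I_Y)^3$ also has dense range. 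This is exactly why the paper simply asserts ``$(S-\lambda I_Y)^3(Y)$ is dense in $Y$'' without further ado. So you may drop the extra hypothesis on $S$ and the final paragraph entirely; the rest of your write-up then matches the paper's proof.
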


Summarizing, we notice that any separable Fr\'echet space $X$ of
dimension $\geq 3$ possesses a hypercyclic tuple $\T$ of operators
such that there are no cyclic operators commuting with $\T$. Since
any operator on a one-dimensional space is cyclic, this does not
carry on to the case $\dim X=1$. Neither it does to the case $\dim
X=2$. Indeed, the only non-cyclic operators on $\K^2$ have the form
$\lambda I$ with $\lambda\in\K$. Since a hypercyclic tuple of
operators on $\K^2$ can not consist only of such operators, we
arrive to the following proposition.

\begin{proposition}\label{2dim} Every hypercyclic tuple
of operators on $\K^2$ contains a cyclic operator.
\end{proposition}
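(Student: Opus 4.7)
The plan is to execute the two-line argument hinted at in the remark preceding the proposition. First I would establish the classification: an operator $T\in L(\K^2)$ is non-cyclic if and only if $T=\lambda I$ for some $\lambda\in\K$. The "if" direction is immediate, since for $T=\lambda I$ every orbit $\{x,Tx\}=\{x,\lambda x\}$ spans at most a line. For the converse, assume $T$ is non-cyclic. Then for every $x\in\K^2\setminus\{0\}$, the vectors $x$ and $Tx$ fail to span $\K^2$, so $Tx\in\K x$. Hence every non-zero vector is an eigenvector of $T$; a standard easy argument (pick two linearly independent $x,y$ with $Tx=\lambda x$, $Ty=\mu y$, and compare with $T(x+y)=\nu(x+y)$) forces $\lambda=\mu$, so $T=\lambda I$.

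Next I would rule out tuples built entirely from scalar operators. Suppose $\T=(T_1,\dots,T_p)$ with $T_j=\lambda_j I$ for each $j$, and let $x\in\K^2$. If $x=0$, the orbit is $\{0\}$. If $x\neq 0$, every element of $\F_\T x$ has the form $\lambda_1^{k_1}\cdots\lambda_p^{k_p}x\in\K x$, so the orbit is contained in the one-dimensional subspace $\K x$, which is nowhere dense in $\K^2$. Therefore $\T$ is not hypercyclic.

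Combining the two steps finishes the proof: if a hypercyclic tuple $\T=(T_1,\dots,T_p)$ on $\K^2$ contained no cyclic operator, then every $T_j$ would be a scalar multiple of the identity, contradicting the previous paragraph. Hence at least one $T_j$ must be cyclic. There is no real obstacle here; the only point that requires a sentence of care is the classification of non-cyclic operators on $\K^2$, and even that is completely elementary.
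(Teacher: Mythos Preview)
Your proposal is correct and follows exactly the paper's own route: the paper simply asserts (in the sentence preceding the proposition) that the only non-cyclic operators on $\K^2$ are the scalar multiples of the identity and that a hypercyclic tuple cannot consist solely of such operators, and you have supplied the (elementary) details behind both assertions. There is nothing to add.
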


Theorem~5.5 in \cite{feld} provides a sufficient condition on a
commuting tuple $\T$ of continuous linear operators on a locally
convex topological vector space $X$ to have each orbit either dense
or nowhere dense. It is worth noting that the local convexity
condition can be dropped in exactly the same way as it is done by
Wengenroth \cite{ww}. One just has to replace the point spectrum
$\sigma_p(A^*)$ of the dual operator $A^*$ of each $A\in L(X)$ in
the statement by the set $\{\lambda\in\K:\overline{(A-\lambda
I)(X)}\neq X\}$, which coincides with $\sigma_p(A^*)$ in the locally
convex case.

Finally, we note that Feldman \cite{feld} proved that there are no
hypercyclic tuples of normal operators on a separable infinite
dimensional complex Hilbert space ${\cal H}$. On the other hand,
Bayart and Matheron \cite{bm} constructed a unitary operator $U$ on
${\cal H}$ and $x\in{\cal H}$ such that $\{zU^nx:z\in\C,\
n\in\Z_+\}$ is dense in ${\cal H}$ endowed with its weak topology
$\sigma$. According to Feldman \cite{feld}, there are $a,b\in\C$
such that $\{a^nb^m:n,m\in\Z_+\}$ is dense in $\C$. It follows that
$(aI,bI,U)$ is a commuting triple of normal operators on $\cal H$,
which is a hypercyclic triple of operators on $({\cal H},\sigma)$.


\small\rm

\vskip1truecm

\scshape

\noindent Stanislav Shkarin

\noindent Queens's University Belfast

\noindent Pure Mathematics Research Centre

\noindent University road, Belfast, BT7 1NN, UK

\noindent E-mail address: \qquad {\tt s.shkarin@qub.ac.uk}

\end{document}